\documentclass[12pt]{amsart}

\usepackage[margin=1in]{geometry}
\usepackage{amsmath}
\usepackage{amssymb}

\newtheorem{theorem}{Theorem}[section]
\newtheorem{lemma}[theorem]{Lemma}
\newtheorem{proposition}[theorem]{Proposition}
\newtheorem{corollary}[theorem]{Corollary}

\theoremstyle{definition}
\newtheorem{definition}[theorem]{Definition}

\DeclareMathOperator{\Sym}{Sym}
\DeclareMathOperator{\Aut}{Aut}
\newcommand{\dcup}{\mathbin{\dot\cup}}
\newcommand{\cB}{\mathcal{B}}

\begin{document}

\title{Chouinard's conjecture for graphical $t$-designs}

\author{Yeow Meng Chee} \address{Singapore University of Technology and
Design, Singapore} \email{ymchee@sutd.edu.sg}

\subjclass[2020]{05B05, 05E18} \keywords{graphical design, $t$-design,
$t$-wise balanced design, Chouinard conjecture, Ray--Chaudhuri--Wilson
theorem, Kramer--Mesner matrix}

\begin{abstract}
A $t$-wise balanced design on the edge set of a complete graph is graphical
if its block multiset is invariant under the induced action of the symmetric
group on the vertices. Chouinard conjectured that, for each fixed index
$\lambda$, there are only finitely many nontrivial simple graphical $t$-wise
balanced designs with $t > 1$. We prove the conjecture for $t$-designs,
which are the $t$-wise balanced designs whose blocks all have the same size.
Our theorem does not require simplicity, and we bound all parameters of the
designs by explicit polynomials in $\lambda$.
\end{abstract}

\maketitle

\section{Introduction}

A $t$-wise balanced design of index $\lambda$ is a pair $(X, \cB)$, where
$X$ is a finite set and $\cB$ is a multiset of subsets of $X$, called
\emph{blocks}, such that every $t$-subset of $X$ lies in exactly $\lambda$
blocks, counted with multiplicity. The design $(X, \cB)$ is a
\emph{$t$-design} if all of its blocks have the same size, and it is
\emph{simple} if no block is repeated. Throughout the paper, $V$ is an
$n$-element set, $K_n$ is the complete graph with vertex set $V$, and
\[
X = E(K_n) = \binom{V}{2}
\]
is its edge set, which is the set of all $2$-subsets of $V$. The symmetric
group $\Sym(V)$ acts naturally on $X$. A design on $X$ is called
\emph{graphical} if its block multiset is invariant under this action.
Writing $\mu_{\cB}(B)$ for the multiplicity of a subset $B$ of $X$ in the
block multiset $\cB$, a design on $X$ is graphical precisely when
$\mu_{\cB}(B)$ depends only on the isomorphism type of the spanning subgraph
$(V, B)$ of $K_n$.

Graphical $t$-designs first appeared in the work of Alltop
\cite{Alltop1966}. His note proved what is now called Alltop's lemma, which
counts the spanning subgraphs of $K_n$ isomorphic to a given graph and
containing a fixed subgraph, in terms of the orders of the automorphism
groups of these two graphs. He used it to show that, for every odd $n \geq
3$, the edge sets of the spanning subgraphs of $K_n$ consisting of a single
cycle of length $(n+3)/2$ together with $(n-3)/2$ isolated vertices form a
graphical $2$-design. A graph whose isomorphism class forms a graphical
$t$-design is called \emph{$t$-edge-balanced}. Further $2$-edge-balanced
graphs appear in \cite{Caliskan2016, CaliskanChee2014}. The first
$3$-edge-balanced graphs are constructed in \cite{Chee2026}, where it is
also shown that no nontrivial $t$-edge-balanced graphs exist for $t \geq 4$.
Klin \cite{Klin1970} described graphical $t$-designs independently around
1970 in his study of the overgroups of the symmetric group acting on pairs.
Kramer and Mesner \cite{KramerMesner1976} determined all simple graphical
$t$-designs with $n \leq 6$ in their study of $t$-designs with prescribed
automorphism groups. They attribute to R.~M.~Wilson an early example, namely
a graphical $3$-$(10,4,1)$ design on $E(K_5)$. The general notion of a
graphical $t$-wise balanced design is due to Chouinard, Kramer, and Kreher
\cite{CKK1983}. They determined all simple graphical $t$-wise balanced
designs of index one and two, using derivation with respect to full stars as
an infinite descent. These results led Chouinard to conjecture that, for
every positive integer $\lambda$, there are only finitely many nontrivial
simple graphical $t$-wise balanced designs with $t > 1$. The conjecture
dates to around the time of \cite{CKK1983}, but it circulated only
privately. The author learned of it from Kramer in early 1989 and recorded
it in \cite{Chee1991}, which was its first appearance in print. It is also
stated in \cite{Chouinard1996} and \cite[Conjecture 1.2]{CheeKaski2008}.

Subsequent work established finiteness under additional restrictions on the
parameters, mostly for simple designs. Chouinard \cite{Chouinard1996} proved
that a nontrivial graphical $t$-wise balanced design with $t > 1$ satisfies
\[
n < 2t + \lambda + 4.
\]
In particular, there are only finitely many such designs for each fixed $t$
and $\lambda$. Chee \cite{Chee1991} determined the graphical $t$-designs
with block sizes three and four, introducing the approach through
Kramer--Mesner matrices with entries polynomial in $n$. By the same method,
aided by symbolic computation, Chee \cite{Chee1992} proved the nonexistence
of nontrivial graphical quintuple systems, and Mori \cite{Mori1994} proved
the nonexistence of nontrivial graphical $\kappa$-tuple systems for $6 \leq
\kappa \leq 8$, as well as for $\kappa$ at most the number of vertices. Both
results address a conjecture of Chee \cite{Chee1992} that no nontrivial
graphical $k$-$\bigl(\binom{n}{2}, k + 1, \lambda\bigr)$ design exists for
$k \geq 5$, which remains open. Betten, Klin, Laue, and Wassermann
\cite{BKLW1999} developed this method further under the name of polynomial
Kramer--Mesner matrices, with the entries computed by Alltop's lemma, and
proved finiteness for block size $k = t + 1$. Chee \cite{Chee2001} extended
their result to $k \leq 4t/3$. Chee and Kaski \cite{CheeKaski2008}
enumerated graphical $t$-designs in a further range of parameters and noted
that Chouinard's conjecture remains open.

In all of this work, the parameters are bounded in terms of $\lambda$ only
under additional restrictions. Chouinard's inequality bounds $n$ in terms of
$t$ and $\lambda$ but leaves $t$ free, and the method based on the growth of
the polynomial entries of the Kramer--Mesner matrix requires the block size
to be a bounded multiple of the strength. We remove both restrictions. The
Ray--Chaudhuri--Wilson theorem bounds the block size of a nontrivial
graphical $t$-design with $t \leq n$ by $O(t^{3/2})$, with no assumption on
$k$, and for a test graph $T$ with components $K_1$, $K_2$, and $K_4$, whose
automorphism group has only orbits of quadratic size on the missing edges,
the orbit of any block containing $E(T)$ under the automorphism group of $T$
has cardinality $\Omega(t^2/(k - t))$, while the design condition permits at
most $\lambda$ such blocks. For large $t$ these estimates collide, and the
full-star descent of \cite{CKK1983} carries the resulting bound on $t$ to
all nontrivial graphical $t$-designs.

\begin{theorem}
\label{thm:main}
For every positive integer $\lambda$, there are only finitely many
nontrivial graphical $t$-$\bigl(\binom{n}{2}, k, \lambda\bigr)$ designs with
$t \geq 2$.
\end{theorem}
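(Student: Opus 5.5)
We follow the strategy outlined at the end of the introduction: first bound $t$ in terms of $\lambda$, then invoke Chouinard's inequality $n < 2t+\lambda+4$ \cite{Chouinard1996} to bound $n$. Since $k \le \binom{n}{2}$ and there are finitely many graphical multisets of bounded multiplicity on a fixed $X$, this yields finiteness. Multiplicity of any block is at most $\lambda$, so once $n$ is bounded the count is finite.

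\textbf{Step 1 (reduction to $t \le n$ via full-star descent).} Suppose the design is nontrivial with $t$ large relative to $n$. Following \cite{CKK1983}, we derive with respect to a full star: fix a vertex $v$ and the $n-1$ edges at $v$. The blocks containing the whole star, with the star removed, form a graphical design on $E(K_{n-1})$ with strength $t-(n-1)$ and the same $\lambda$, and with block size $k-(n-1)$. We check that nontriviality is preserved, or that triviality of the derived design forces triviality of the original. Iterating reduces to the case $t \le n$ while keeping $\lambda$ fixed, and the bound on $t$ in that case then transfers back as a bound on the original $t$ plus the number of descent steps, which is itself at most $n$.

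\textbf{Step 2 (upper bound on $k$).} For $t \le n$, apply the Ray--Chaudhuri--Wilson inequality to the design or its complement. Nontrivial means $k < \binom n2 - \ldots$, and we may replace $\mathcal B$ by its complement to assume $k \le \binom n2/2$. The inequality $b \ge \binom{v}{\lfloor t/2\rfloor}$ is combined with $b = \lambda\binom{v}{t}/\binom{k}{t}$, or with the standard consequence $k \ge t + \ldots$ relative to $v = \binom n2$. The aim is $k - t = O(\ldots)$ and in particular $k = O(t^{3/2})$. This uses that $v=\binom n2 \ge \binom t2$ is quadratic in $t$.

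\textbf{Step 3 (orbit counting with a test graph).} Choose a graph $T$ on $V$ with $t$ edges whose components are copies of $K_4$ (6 edges each) plus possibly one $K_2$ and isolated vertices. Some block contains $E(T)$, and the number of blocks containing $E(T)$ is exactly $\lambda$. The group $\Aut(T)$ maps blocks containing $E(T)$ to blocks containing $E(T)$, so the orbit of any such block $B$ has size at most $\lambda$. The $k-t$ extra edges of $B$ lie among missing edges of $T$, and every orbit of $\Aut(T)$ on missing edges has size $\Omega(t^2)$, for example edges between two distinct $K_4$'s. If $B\setminus E(T)$ is nonempty, then a stabilizer argument shows $|\Aut(T):\Aut(T)_B| \ge c\,t^2/(k-t)$. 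The idea is that the $k-t$ extra edges cannot be invariant under a subgroup of index smaller than the orbit size divided by $k-t$. If $B = E(T)$, then $k=t$ and the design is trivial. Hence $\lambda \ge c t^2/(k-t) \ge c' t^{1/2}$ by Step 2, which gives $t = O(\lambda^2)$.

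\textbf{Main obstacle.} The key step is the orbit lower bound in Step 3. It requires showing that a set $S$ of $k-t$ missing edges has stabilizer of index $\Omega(t^2/|S|)$ in $\Aut(T)$, which is a wreath product $\Sym(4)\wr \Sym(m)$ times symmetric groups. I would first try the following argument. Let $m$ be the number of $K_4$'s. The edges of $S$ touch at most $2|S|$ of the $K_4$'s, so the induced set partition of the $m$ $K_4$'s into touched and untouched must be preserved, which forces index at least $\binom{m}{j}$ with $1\le j\le 2|S|$. When $|S|$ is much smaller than $m$, this gives index $\ge m$. For $|S|$ comparable to $t^2$, the bound $t^2/|S|$ is $O(1)$ anyway. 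Interpolating these cases carefully, possibly with a weaker exponent, still suffices, since any bound of the form $t^{\epsilon}$ against $k-t=O(t^{3/2})$ gives a polynomial bound on $t$ in $\lambda$.
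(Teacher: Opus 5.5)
Your plan has the paper's architecture: full-star descent to $t \le n$, then Ray--Chaudhuri--Wilson with Chouinard's bound to get $k = O(t^{3/2})$, then an orbit count on a test graph. However, Step 3 fails for the test graph you chose.

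In $T = m_4 K_4 \dcup K_2 \dcup m_1 K_1$, the edges from the lone $K_2$ to the isolated vertices form one $\Aut(T)$-orbit of size $2m_1 = O(t)$. The edges from it to the $K_4$'s form an orbit of size $8m_4 = O(t)$. A single $K_2$ only gives $t$ edges when $t \equiv 0, 1 \pmod 6$. Patching the other residues with a few more $K_2$'s creates orbits of bounded size between them. So it is false that every orbit on missing edges has size $\Omega(t^2)$ (this holds only when $6 \mid t$ and no $K_2$ is needed).

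As a result, your count cannot exclude, for example, the block consisting of $E(T)$ together with all $2(n-2)$ further edges at the two vertices of the $K_2$. It has $k - t = O(t)$ and is fixed by all of $\Aut(T)$, so its orbit has size $1$.

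The missing idea is to make every component type occur $\Omega(t)$ times. The paper takes $m_4 = \lceil t/7 \rceil$, $m_2 = t - 6m_4$ and $m_1 = n - 2m_2 - 4m_4$, all at least $(t-36)/7$ for $t \ge 72$. The $K_4$'s are there so that linearly many $K_2$'s still fit on $n \ge t$ vertices while leaving linearly many isolated vertices. Then $2$-transitivity of each $S_{m_p}$ on the $K_p$-components makes every orbit of size at least $\binom{m}{2}$.

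With that in place, the step you call the main obstacle is immediate. For $e \in B \setminus E(T)$, the orbit of $e$ lies in the union of the sets $B' \setminus E(T)$ over the at most $\lambda$ blocks $B'$ in the orbit of $B$, so $L \le \lambda(k-t)$. Your touched-$K_4$ argument, by contrast, gives nothing when $B \setminus E(T)$ meets none of the $K_4$'s or all of them.

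Step 1 has a second genuine gap. Bounding the original $t$ via the number of descent steps, ``which is at most $n$'', is circular. Here $n$ is not yet bounded, and Chouinard's inequality bounds it only through the $t$ you are trying to bound.

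What makes the descent useful is that $q = \binom{n}{2} - t$ and $d = k - t$ are unchanged by each derivation. Wilson's inequality $v > k + t$ for nontrivial designs gives $t < q$ for the original design. Hence $t < \binom{n^*}{2} - t^*$, which is bounded once $t^*$ and $n^* < 2t^* + \lambda + 4$ are.

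You also cannot assert that triviality of a derived design forces triviality of the original. The paper instead shows that if a derived design contains the complete design, then $\lambda \ge \binom{q}{d} \ge q > t$.

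Finally, two points in Step 2. Passing to the complement changes the index, which would break Step 3. It is also unnecessary, because $v > k + t$ already supplies the hypothesis $v > k + s$ of Ray--Chaudhuri--Wilson. And the estimate $k = O(t^{3/2})$ needs the upper bound $v = O(t^2)$ from Chouinard's inequality, not the lower bound $v \ge \binom{t}{2}$.
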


Here, \emph{nontrivial} means that $t < k < \binom{n}{2}$ and the $t$-design
neither is empty nor contains the complete design. A refinement of the
argument yields explicit bounds, polynomial in $\lambda$, on all the
parameters (Theorem~\ref{thm:explicit}). The general case of Chouinard's
conjecture, for $t$-wise balanced designs, remains open. Precise definitions
are given in Section~\ref{sec:preliminaries}.

\section{Preliminaries}
\label{sec:preliminaries}

Throughout, $\lambda$ is a fixed positive integer, and the constants implied
by the $O$, $\Omega$, and $\Theta$ notation may depend on $\lambda$.

Let $t$ be a positive integer, let $X$ be a set of cardinality $v$, and let
$K$ be a set of integers, each at least $t$. A \emph{$t$-wise balanced
design} of type $t$-$(v, K, \lambda)$ is a pair $(X, \cB)$, where $\cB$ is a
multiset of subsets of $X$ with cardinalities in $K$, such that every
$t$-subset of $X$ is contained in exactly $\lambda$ members of $\cB$,
counted with multiplicity. A $t$-wise balanced design of type $t$-$(v,
\{k\}, \lambda)$, whose members are then all $k$-subsets, is called a
\emph{$t$-design}, and we abbreviate its type to $t$-$(v, k, \lambda)$. We
call $t$ the \emph{strength}, $k$ the \emph{block size}, and $\lambda$ the
\emph{index} of a $t$-$(v, k, \lambda)$ design.

The multiplicity of a $k$-subset $B$ of $X$ in $\cB$ is denoted
$\mu_{\cB}(B)$, and the $k$-subsets $B$ with $\mu_{\cB}(B) \geq 1$ are the
\emph{blocks}. A $t$-design is \emph{simple} if $\mu_{\cB}(B) \leq 1$,
\emph{complete} if $\mu_{\cB}(B) = 1$, and \emph{empty} if $\mu_{\cB}(B) =
0$, in each case for every $k$-subset $B$ of $X$. A $t$-design
\emph{contains the complete design} if $\mu_{\cB}(B) \geq 1$ for every
$k$-subset $B$ of $X$, and it is \emph{nontrivial} if $t < k < v$ and it
neither is empty nor contains the complete design.

It is well known (see, for example, \cite{BJL1999}) that the number of
blocks of a $t$-$(v, k, \lambda)$ design, counted with multiplicity, is
\begin{equation}
\label{eq:blocks}
b = |\cB| = \lambda \frac{\binom{v}{t}}{\binom{k}{t}}.
\end{equation}
For a graph $G$, we write $E(G)$ for its edge set. The group $\Sym(V)$ of
all permutations of $V$ acts on $X = E(K_n)$ by $\sigma(\{x, y\}) =
\{\sigma(x), \sigma(y)\}$, and on subsets of $X$ elementwise. For a spanning
subgraph $T$ of $K_n$, let $\Aut(T)$ denote its full automorphism group, the
group of all $\sigma \in \Sym(V)$ with $\sigma(E(T)) = E(T)$. We write $G
\dcup H$ for the disjoint union of the graphs $G$ and $H$, and $mG$ for the
disjoint union of $m$ copies of $G$. The \emph{full star} at a vertex $x \in
V$ is
\[
S_x = \bigl\{ \{x, y\} : y \in V \setminus \{x\} \bigr\},
\]
the set of the $n - 1$ edges of $K_n$ through $x$. Given a $t$-design on $X
= E(K_n)$, write
\[
q = \binom{n}{2} - t \qquad \text{and} \qquad d = k - t.
\]
We call $d$ the \emph{excess}, since a block that contains the edge set of a
spanning $t$-edge subgraph $T$ consists of $E(T)$ together with exactly $d$
edges of $K_n$ outside $T$. Similarly, $q$ is the number of edges of $K_n$
outside $T$, from which the $d$ excess edges are drawn. The $t$-designs of
interest are those invariant under the action of $\Sym(V)$.

\begin{definition}
\label{def:graphical}
A $t$-$\bigl(\binom{n}{2}, k, \lambda\bigr)$ design $(X, \cB)$ is
\emph{graphical} if
\[
\mu_{\cB}(\sigma(B)) = \mu_{\cB}(B)
\]
for all $\sigma \in \Sym(V)$ and all $B \in \binom{X}{k}$.
\end{definition}

This is the usual invariance of the block multiset under the induced action
of $\Sym(V)$: the image of $\cB$ under $\sigma$ is the multiset with
multiplicity function $\mu_{\cB} \circ \sigma^{-1}$, so the equalities
$\mu_{\cB}(\sigma(B)) = \mu_{\cB}(B)$, for all $B \in \binom{X}{k}$, hold if
and only if the multiset $\cB$ satisfies $\sigma(\cB) = \cB$. We use the
following form of the Ray--Chaudhuri--Wilson theorem. For even $t$, it is
Theorem 1 of \cite{RCW1975}, which allows repeated blocks and bounds even
the number of distinct blocks. For odd $t$, it follows from the corollary to
\cite[Theorem 1]{RCW1975}, stated immediately after that theorem.

\begin{theorem}[Ray--Chaudhuri--Wilson]
\label{thm:rcw}
Let $(X, \cB)$ be a $t$-$(v, k, \lambda)$ design with $t \geq 2$, and put $s
= \lfloor t/2 \rfloor$. If $v > k + s$, then $b \geq \binom{v}{s}$.
\end{theorem}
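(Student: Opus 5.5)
The statement is the Ray--Chaudhuri--Wilson theorem, and I would prove it by the inclusion-matrix rank argument. Let $W$ be the $\binom{v}{s} \times b$ matrix over $\mathbb{R}$ with rows indexed by the $s$-subsets $S$ of $X$ and columns by the blocks of $\cB$, one column for each block counted with multiplicity. Put $W_{S,B} = 1$ if $S \subseteq B$ and $0$ otherwise. Then $\operatorname{rank} W \leq b$. So it suffices to show that the $\binom{v}{s} \times \binom{v}{s}$ matrix $M = W W^{\mathsf T}$ is nonsingular; the stronger bound on the number of distinct blocks follows by weighting the columns by multiplicities instead.

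The first step is to compute $M$. The entry $M_{S,S'}$ is the number of blocks containing $S \cup S'$, and $|S \cup S'| = 2s - i \leq 2s \leq t$ when $|S \cap S'| = i$. A $t$-design is also a $j$-design for every $j \leq t$, with index
\[
\lambda_j = \lambda \binom{v-j}{t-j} \Big/ \binom{k-j}{t-j}.
\]
Hence $M_{S,S'} = \lambda_{2s-i}$ depends only on $i$, and $M = \sum_{i=0}^{s} \lambda_{2s-i} A_i$, where $A_i$ is the $i$-th relation matrix of the Johnson scheme $J(v,s)$. This is where $t \geq 2s$ is used, and it explains why odd $t$ only gives the exponent $\lfloor t/2 \rfloor$.

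The second step, which is the main obstacle, is to rewrite $M$ as a combination with nonnegative coefficients of the positive semidefinite matrices $N_j = W_{js}^{\mathsf T} W_{js}$, for $0 \leq j \leq s$. Here $W_{js}$ is the inclusion matrix of $j$-subsets versus $s$-subsets, so $(N_j)_{S,S'} = \binom{|S\cap S'|}{j}$. The top matrix is $N_s = I$, and the key point is that its coefficient is strictly positive. Then $M \succeq c_s I$ with $c_s > 0$, and $M$ is positive definite.

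I would first do this concretely. Write $\lambda_{2s-i} = \sum_j c_j \binom{i}{j}$ and invert by binomial inversion:
\[
c_j = \sum_{i} (-1)^{j-i} \binom{j}{i} \lambda_{2s-i}.
\]
This is an alternating sum of indices, and it is the cleanest place for positivity to be verified. The standard way to see positivity is to interpret $c_j$ combinatorially. Fix a $(2s-j)$-set $Y$ and a $j$-subset $J \subseteq Y$. Inclusion--exclusion shows that $c_j$ counts the blocks $B \supseteq Y \setminus J$ with $B \cap J = \emptyset$. That is, $c_j$ is the index of a derived-residual design, which is constant because the design has strength at least $2s$. For $j = s$ this counts the blocks containing a fixed $s$-set $P$ and disjoint from a fixed $s$-set $Q$ with $P \cap Q = \emptyset$. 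This quantity is positive because blocks containing $P$ exist ($\lambda_s > 0$). Moreover, the hypothesis $v > k + s$ leaves room outside such a block for an $s$-set, so by double counting $\lambda_s \binom{v-k}{s} = c_s \binom{v-s}{s}$, which is positive. The same argument gives $c_j \geq 0$ for all $j$.

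With $M = \sum_j c_j N_j \succeq c_s I \succ 0$, the matrix $M$ is nonsingular, so $\binom{v}{s} = \operatorname{rank} M \leq \operatorname{rank} W \leq b$. This completes the argument, with the nonnegativity and strict positivity of the $c_j$ as the step I expect to need the most care.
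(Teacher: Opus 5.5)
Your proposal is correct, but there is nothing in the paper to match it against: the paper does not prove this statement. It quotes Theorem~1 of Ray-Chaudhuri and Wilson for even $t$, and the corollary stated after it for odd $t$. What you give is the standard inclusion-matrix (Gram-matrix) proof of that cited theorem. You write $WW^{\mathsf T}=\sum_j c_j W_{js}^{\mathsf T}W_{js}$, where each $c_j\ge 0$ is the number of blocks containing one set and disjoint from another, and you use $W_{ss}=I$.

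There is one mild difference from the paper's route. Your computation only uses the constancy of $\lambda_j$ for $j\le 2s$, so it handles odd $t=2s+1$ directly by regarding the design as a $2s$-design. You therefore need no separate corollary for odd $t$. Your argument also only needs $v\ge k+s$, which makes $\binom{v-k}{s}\ge 1$.

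One index slip needs fixing. For
\[
c_j=\sum_i(-1)^{j-i}\binom{j}{i}\lambda_{2s-i}
\]
to count the blocks containing $Y\setminus J$ and disjoint from $J$, the set $Y$ must have $2s$ elements, so that $Y\setminus J$ has $2s-j$ elements. As you wrote it, with $|Y|=2s-j$, inclusion--exclusion would give $\sum_i(-1)^i\binom{j}{i}\lambda_{2s-2j+i}$ instead. Equivalently, take a $(2s-j)$-set $P$ and a disjoint $j$-set $J$. Your $j=s$ case already uses the correct sizes. The double count $\lambda_s\binom{v-k}{s}=c_s\binom{v-s}{s}$ then gives $c_s>0$, so $M\succeq c_s I\succ 0$ and the rank bound $\binom{v}{s}\le b$ follow as you say.
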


We also use Chouinard's bound, which is the specialization to $t$-designs of
\cite[Theorem 2.1]{Chouinard1996}.

\begin{theorem}[Chouinard]
\label{thm:chouinard}
Let $(X, \cB)$ be a nontrivial graphical $t$-$\bigl(\binom{n}{2}, k,
\lambda\bigr)$ design with $t > 1$. Then $n < 2t + \lambda + 4$.
\end{theorem}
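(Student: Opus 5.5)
Because Theorem~\ref{thm:chouinard} is quoted from \cite{Chouinard1996}, I only outline how I would reprove it. The idea is to use a test graph with a large automorphism group and combine the graphical invariance with the fact that only $\lambda$ blocks contain a given $t$-set. Suppose, for a contradiction, that $n \geq 2t + \lambda + 4$. Then $t \leq n-1$, so $K_n$ contains the star $T = K_{1,t}$. Let $x$ be its centre, $L$ its set of $t$ leaves, and $W = V \setminus (L \cup \{x\})$ its set of free vertices, where $m = |W| = n-1-t \geq t + \lambda + 3$. The group $\Aut(T)$ contains $\Sym(L) \times \Sym(W)$, and it fixes $E(T)$. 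Hence for any block $B \supseteq E(T)$, all images $\sigma(B)$ with $\sigma \in \Aut(T)$ are again blocks through $E(T)$, and by graphicality they have the same multiplicity. So the $\Aut(T)$-orbit of $B$ has size at most $\lambda$.

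The first key step is a rigidity lemma. Since $m > \lambda$ and $m \geq 5$ (for $\lambda \ge 1$, $t \ge 2$), the stabilizer of $B$ in $\Sym(W)$ is a subgroup of index at most $\lambda < m$. By the classical fact that the only proper subgroups of $\Sym(W)$ of index less than $m$ are the alternating group, the stabilizer contains $\mathrm{Alt}(W)$. An $\mathrm{Alt}(W)$-invariant set of edges is $\Sym(W)$-invariant once $m \geq 4$. Therefore every block through $E(T)$ has the following shape:
\begin{itemize}
\item it contains either all $\binom{m}{2}$ edges inside $W$ or none of them;
\item for each vertex $u \in L \cup \{x\}$, it contains either all $m$ edges from $u$ to $W$ or none of them.
\end{itemize}
The edges inside $L \cup \{x\}$ are unconstrained at this stage. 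The same argument applies verbatim to any $t$-edge test graph that leaves at least $m$ vertices isolated. So every block containing any such test graph is a union of full $\Sym(W)$-orbits on the edges meeting $W$.

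The second step converts this shape information into a contradiction with nontriviality. I would compare two different $t$-edge test graphs that are contained in a common block. One is the star $T$. The other is, for instance, a graph obtained from $T$ by moving one edge so that it has one endpoint in $W$. Fix a block $B$ and choose such a test graph $T' \subseteq B$ whose isolated-vertex set $W'$ differs from $W$. Applying the rigidity lemma to $T'$ shows that $B$ is invariant under both $\Sym(W)$ and $\Sym(W')$. These groups together generate a much larger symmetric group. Iterating this, I expect to show that $B$ is invariant under $\Sym(V \setminus \{x\})$, or that $B$ or its complement is essentially a union of full stars. Counting then pins $k$ down to a small set of values: $k$ must be a sum of quantities $m$, $\binom{m}{2}$, and terms bounded by $\binom{t+1}{2}$. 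Finally, I would check the design equation (the $\lambda$-count) on a second $t$-set, such as a $t$-matching, which the separation $m \geq t+\lambda+3$ makes possible. This should force either $\lambda$ to be at least $m > \lambda$, or the design to be empty or to contain the complete design. Either outcome is a contradiction.

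The main obstacle is this second step. Ensuring that a single block contains two test graphs whose isolated sets together generate enough symmetry is delicate when $d = k-t$ is large, because then $B$ can be dense on $L \cup \{x\}$. My first attempt would be to pass to complements when $k > \binom{n}{2}/2$, since the complementary design is again graphical with the same strength. Otherwise I would use the full-star derivation of \cite{CKK1983} to reduce $t$ by $n-1$ while keeping the design nontrivial and graphical. The hard part is then checking that the inequality $n \geq 2t+\lambda+4$ survives each step of the reduction.
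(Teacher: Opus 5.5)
The paper does not reprove Theorem~\ref{thm:chouinard}; it quotes \cite[Theorem 2.1]{Chouinard1996}. Your first step is sound, and it is the same mechanism as Lemmas~\ref{lem:blockfix} and~\ref{lem:orbit}. The $\Sym(W)$-orbit of a block through a $t$-set $T_0$ consists of at most $\lambda$ distinct blocks through $T_0$. So for $m\ge 5$ and $m>\lambda$ the stabilizer contains $\mathrm{Alt}(W)$, and the block is $\Sym(W)$-invariant. As you observe, this holds for every $t$-subset $T_0$ of every block, with $W=V\setminus V(T_0)$ of size at least $n-2t\ge\lambda+4$.

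The genuine gap is the second step. It carries the whole content of the theorem, and you only describe it as an expectation.

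\begin{itemize}
\item The star is the wrong anchor. When $k=n-1$, the full star $S_x$ contains $E(K_{1,t})$, every $t$-subset of $S_x$ is again a star at $x$, and $S_x$ is invariant under $\Sym(V\setminus\{x\})$. So comparing test graphs inside blocks through a star can never by itself produce a contradiction.
\item The contradiction has to come from a $t$-set that such blocks miss. Your sentence that the count on a $t$-matching ``should force'' $\lambda\ge m$ or triviality is not an argument.
\item The decomposition of $k$ into $m$, $\binom{m}{2}$ and small terms does not constrain $k$ usefully.
\item Complementation is legitimate (it even lowers the index when $k>v/2$), but it does not touch the difficulty.
\item The full-star derivation is unavailable here. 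It needs the $n-1$ edges of $S_x$ to lie inside a $t$-set, i.e.\ $t\ge n-1$, whereas $n\ge 2t+\lambda+4$ gives $t\le (n-5)/2$.
\end{itemize}

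The missing idea is to anchor at a $t$-matching $M$ (which exists since $n\ge 2t$) and a block $B\supseteq M$, and put $W=V\setminus V(M)$. Then apply your lemma to $t$-subsets of $B$ obtained from $M$ by swapping a single edge. Every uncovered set below has at least $n-2t\ge\lambda+4$ vertices, so the lemma applies each time. Write $uv$ for the edge $\{u,v\}$, and pick $e\in B\setminus M$, which exists since $k>t$.

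\begin{itemize}
\item If $e=uu'$ joins the matching edges $uv$ and $u'v'$, then $(M\setminus\{u'v'\})\cup\{uu'\}$ leaves $W\cup\{v'\}$ uncovered. Since $u'$ is adjacent to $v'$, it is adjacent to all of $W$, which reduces to the next case.
\item If $e=uw$ with $w\in W$, choose $cd\in M$ with $u\notin\{c,d\}$. Then $(M\setminus\{cd\})\cup\{uw\}$ leaves $(W\setminus\{w\})\cup\{c,d\}$ uncovered. Since $cd\in B$, the block contains every edge on this set, in particular an edge $w_1w_2$ inside $W$.
\item If $e$ already lies inside $W$, take $w_1w_2=e$.
\end{itemize}

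Finally, for each $ab\in M$, the $t$-matching $(M\setminus\{ab\})\cup\{w_1w_2\}$ lies in $B$ and leaves $(W\setminus\{w_1,w_2\})\cup\{a,b\}$ uncovered. These sets all meet $W$ and together with $W$ cover $V$, so their symmetric groups and $\Sym(W)$ generate $\Sym(V)$. Hence $B$ is a nonempty $\Sym(V)$-invariant edge set, so $B=E(K_n)$, contradicting $k<\binom{n}{2}$.
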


\section{Reduction by Full-Star Derivation}
\label{sec:reduction}

This section carries out the reduction to the range $t \leq n$. We first
record an elementary restriction on the parameters of a nontrivial
$t$-design. The result goes back to Wilson \cite{Wilson1973}, as noted in
the remark following Theorem~1 of \cite{GLL1980}, and we include a short
proof for completeness.

\begin{lemma}
\label{lem:vkt}
Let $(X, \cB)$ be a nontrivial $t$-$(v, k, \lambda)$ design. Then $v > k +
t$.
\end{lemma}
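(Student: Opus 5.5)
The plan is to prove the contrapositive: assume $v \leq k + t$ and show that the design is trivial, that is, empty or containing the complete design. The tool is the standard inclusion--exclusion formula for a $t$-design.

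\textbf{Step 1 (disjointness counts are constant).} For disjoint sets $I, J \subseteq X$ with $|I| + |J| \leq t$, the number $\lambda_{I,J}$ of blocks (with multiplicity) that contain $I$ and are disjoint from $J$ depends only on $|I|$ and $|J|$. By inclusion--exclusion over subsets of $J$,
\[
\lambda_{I,J} = \sum_{L \subseteq J} (-1)^{|L|} \lambda_{|I| + |L|},
\]
where $\lambda_i = \lambda \binom{v-i}{t-i} / \binom{k-i}{t-i}$ is the number of blocks containing a fixed $i$-set, for $i \leq t$.

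\textbf{Step 2 (reduction to $t$-subsets).} Let $C = X \setminus B$ be the complement of an arbitrary $k$-subset $B$, so $|C| = v - k \leq t$. Choose a set $I \subseteq B$ with $|I| = t - |C|$; this is possible because $k \geq t - |C|$ holds trivially. A block $B'$ of size $k$ that contains $I$ and is disjoint from $C$ lies inside $X \setminus C = B$, and since $|B'| = |B| = k$, it must equal $B$. Therefore
\[
\mu_{\cB}(B) = \lambda_{I,C},
\]
and this depends only on $v$, $k$, $t$ and $\lambda$, not on $B$.

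\textbf{Step 3 (conclusion).} Every $k$-subset has the same multiplicity $m$. If $m = 0$ the design is empty, and if $m \geq 1$ it contains the complete design. Either way it is trivial, which is a contradiction. Together with $t < k < v$, which holds by nontriviality, this gives $v > k + t$.

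The main point needing care is Step 1: the claim that $\lambda_{I,J}$ depends only on $|I|$ and $|J|$ when $|I| + |J| \leq t$. This holds because every term $\lambda_{|I|+|L|}$ involves a set of size at most $t$, and $t$-designs are also $i$-designs for all $i \leq t$. The hypothesis $t < k$ is used here so that the counts $\lambda_i$ are well defined.
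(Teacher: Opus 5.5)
Your proof is correct and is essentially the paper's argument: inclusion--exclusion shows that the number of blocks disjoint from a fixed set $C$ of size $v-k \le t$ does not depend on $C$, and since $X \setminus C$ is the only $k$-subset disjoint from $C$, every $k$-subset has the same multiplicity, so the design is empty or contains the complete design. The differences are cosmetic: your auxiliary set $I$ is unnecessary, since $I = \emptyset$ already works because $|C| \le t$, and you conclude directly from constancy, whereas the paper evaluates the common value as $b/\binom{v}{\ell} > 0$ using nonemptiness.
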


\begin{proof}
Suppose instead that $v \leq k + t$, and put $\ell = v - k$. Then $0 < \ell
\leq t$. For $0 \leq i \leq t$, the number $\lambda_i$ of members of $\cB$
containing a fixed $i$-subset $I$ of $X$, counted with multiplicity, does
not depend on $I$. Indeed, counting the pairs $(T_0, B)$, where $B$ is a
member of $\cB$ and $T_0$ is a $t$-subset with $I \subseteq T_0 \subseteq
B$, in two ways gives
\[
\lambda_i \binom{k - i}{t - i} = \lambda \binom{v - i}{t - i}.
\]
By inclusion--exclusion, the number of members of $\cB$ disjoint from a
fixed $\ell$-subset $J$ of $X$, counted with multiplicity, is
\[
\sum_{i=0}^{\ell} (-1)^i \binom{\ell}{i} \lambda_i,
\]
which does not depend on $J$ either. Summing this number over all
$\ell$-subsets $J$ counts each member $B$ of $\cB$ exactly once, since the
unique $\ell$-subset disjoint from $B$ is $X \setminus B$. The number of
members of $\cB$ disjoint from a fixed $\ell$-subset $J$ is therefore $b /
\binom{v}{\ell}$, which is positive because a nontrivial $t$-design is not
empty. The only $k$-subset of $X$ disjoint from $J$ is $X \setminus J$, so
this number equals $\mu_{\cB}(X \setminus J)$. Hence $\mu_{\cB}(X \setminus
J) \geq 1$ for every $\ell$-subset $J$ of $X$, and every $k$-subset of $X$
has the form $X \setminus J$ for such a $J$, so $(X, \cB)$ contains the
complete design. This contradicts nontriviality.
\end{proof}

For a nontrivial $t$-design on $X = E(K_n)$, Lemma~\ref{lem:vkt} gives
\begin{equation}
\label{eq:qkt}
q > k > t \qquad \text{and} \qquad 0 < d < q.
\end{equation}
Derivation with respect to a full star removes a vertex. Let $x \in V$. The
members of the derived design are the sets $B \setminus S_x$, where $B$ runs
over the members of $\cB$ that contain $S_x$, counted with multiplicity.
These sets consist of edges of the complete graph on $V \setminus \{x\}$.
This derivation is due to Chouinard, Kramer, and Kreher \cite[Theorem
2.9]{CKK1983} for simple designs. The following lemma extends it to designs
with repeated blocks and adds that the derivation preserves the two
quantities $q$ and $d$. Since $q$ is unchanged under repeated derivation, a
bound on the strength of the last design so obtained yields, through $t <
q$, a bound on the strength of the original design.

\begin{lemma}[Full-star derivation]
\label{lem:derivation}
Let $(X, \cB)$ be a nontrivial graphical $t$-$\bigl(\binom{n}{2}, k,
\lambda\bigr)$ design with $t \geq n + 1$. Then there is a graphical
$t'$-$\bigl(\binom{n-1}{2}, k', \lambda\bigr)$ design, where
\[
t' = t - (n - 1)
\qquad \text{and} \qquad
k' = k - (n - 1).
\]
These parameters satisfy
\begin{equation}
\label{eq:invariants}
\binom{n - 1}{2} - t' = \binom{n}{2} - t
\qquad \text{and} \qquad
k' - t' = k - t.
\end{equation}
Moreover, we have $t' < k' < \binom{n - 1}{2}$, so the derived design either
is nontrivial or contains the complete design. If it contains the complete
design, then $\lambda \geq \binom{q}{d} \geq q > t$.
\end{lemma}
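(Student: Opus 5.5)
Fix a vertex $x \in V$ and let $\cB'$ be the derived multiset: it consists of the sets $B \setminus S_x$, one for each member $B$ of $\cB$ (with multiplicity) that contains $S_x$. These sets lie in $X' = E(K_{n-1})$ on $V \setminus \{x\}$ and have size $k - (n-1) = k'$. Since $t \geq n+1 > n-1$, we have $t' \geq 2$; also $k' > t' \geq 2$.

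\textbf{Design property.} Let $T'$ be a $t'$-subset of $X'$. Then $T' \cup S_x$ is a $t$-subset of $X$, because $S_x$ and $X'$ are disjoint. A member $B$ of $\cB$ contains $T' \cup S_x$ exactly when it contains $S_x$ and $B \setminus S_x \supseteq T'$. Hence $T'$ lies in exactly $\lambda$ members of $\cB'$, counted with multiplicity.

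\textbf{Graphicality.} Let $\sigma \in \Sym(V \setminus \{x\})$ and extend it to $V$ by fixing $x$. The extension fixes $S_x$ setwise, and $\mu_{\cB'}(C) = \mu_{\cB}(C \cup S_x)$ for every $k'$-subset $C$ of $X'$. Invariance of $\mu_{\cB}$ under the extended $\sigma$ therefore gives invariance of $\mu_{\cB'}$. This is the point where repeated blocks are handled: the multiplicity identity replaces the set-based argument of \cite{CKK1983}.

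\textbf{Parameters.} The identities \eqref{eq:invariants} follow from $\binom{n}{2} - \binom{n-1}{2} = n-1$. Hence $t' < k'$ is immediate, and $k' < \binom{n-1}{2}$ is equivalent to $k < \binom{n}{2}$. It follows that the derived design is either nontrivial, or empty, or contains the complete design. The empty case must be ruled out. The derived design is empty exactly when no member of $\cB$ contains $S_x$. That would make $\lambda_{n-1}$, the number of members through a fixed $(n-1)$-subset, equal to $0$. But $n - 1 \leq t$, and the relation $\lambda_i \binom{k-i}{t-i} = \lambda \binom{v-i}{t-i}$ from the proof of Lemma~\ref{lem:vkt} shows that $\lambda_i > 0$ for all $i \leq t$. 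So the derived design is nonempty.

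\textbf{Complete case.} Suppose $\cB'$ contains the complete design. Fix a $t'$-subset $T'$ of $X'$. Every $k'$-subset of $X'$ containing $T'$ has multiplicity at least $1$. There are $\binom{\binom{n-1}{2} - t'}{k' - t'} = \binom{q}{d}$ such subsets, so $\lambda \geq \binom{q}{d}$. By \eqref{eq:qkt} we have $0 < d < q$, which gives $\binom{q}{d} \geq q$, and $q > t$ also holds by \eqref{eq:qkt}. The only real subtlety in the whole argument is this case analysis: ensuring nonemptiness and extracting the bound $\lambda > t$. The rest is direct bookkeeping.
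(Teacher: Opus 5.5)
Your proposal is correct and matches the paper's proof step for step. Both use the multiplicity definition $\mu_{\cB'}(C)=\mu_{\cB}(C\cup S_x)$, the correspondence between members containing $T'\cup S_x$ and derived members containing $T'$, the extension of $\sigma$ fixing $x$, and the count $\binom{q}{d}$ in the complete case. The only difference is nonemptiness: the paper reads it off directly from the design property you already proved (each $t'$-subset lies in $\lambda\geq 1$ members of $\cB'$), so your detour through $\lambda_{n-1}>0$ is valid but unnecessary.
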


\begin{proof}
Let $x \in V$, and consider the full star $S_x$, with $|S_x| = n - 1 \leq
t$. The edges outside $S_x$ are those of $K_{V \setminus \{x\}}$. Identify
them with the edge set of $K_{n-1}$. For each $k'$-subset $C$ of $X
\setminus S_x$, put
\[
\mu_{\cB'}(C) = \mu_{\cB}(C \cup S_x),
\]
which defines a multiset $\cB'$ of $k'$-subsets of $X \setminus S_x$. We
verify that $(X \setminus S_x, \cB')$ is a graphical
$t'$-$\bigl(\binom{n-1}{2}, k', \lambda\bigr)$ design. Let $R$ be a
$t'$-subset of $X \setminus S_x$. Then $R \cup S_x$ is a $t$-subset of $X$,
since $|R \cup S_x| = t' + (n - 1) = t$. Every $k$-subset of $X$ containing
$R \cup S_x$ contains $S_x$, so $B \mapsto B \setminus S_x$ is a bijection
between the $k$-subsets $B$ of $X$ containing $R \cup S_x$ and the
$k'$-subsets $C$ of $X \setminus S_x$ containing $R$. Hence
\[
\sum_{C \supseteq R} \mu_{\cB'}(C) = \sum_{B \supseteq R \cup S_x} \mu_{\cB}(B)
= \lambda,
\]
the last equality because $(X, \cB)$ has strength $t$. Thus, every
$t'$-subset of $X \setminus S_x$ lies in exactly $\lambda$ members of
$\cB'$, counted with multiplicity. For graphicality, let $\sigma \in \Sym(V
\setminus \{x\})$, and extend it to a permutation of $V$ by fixing $x$. The
extension fixes $S_x$ setwise, so $\sigma(C \cup S_x) = \sigma(C) \cup S_x$,
and the graphicality of $(X, \cB)$ gives
\[
\mu_{\cB'}(\sigma(C)) = \mu_{\cB}(\sigma(C) \cup S_x) = \mu_{\cB}(\sigma(C \cup S_x))
= \mu_{\cB}(C \cup S_x) = \mu_{\cB'}(C).
\]
The identities \eqref{eq:invariants} follow directly:
\[
\binom{n - 1}{2} - t'
= \binom{n}{2} - (n - 1) - t + (n - 1)
= \binom{n}{2} - t
\]
and $k' - t' = k - t$. Since $(X, \cB)$ is nontrivial, \eqref{eq:qkt} gives
$0 < d < q$. By \eqref{eq:invariants}, we have
\[
k' - t' = d > 0
\qquad \text{and} \qquad
\binom{n - 1}{2} - k' = q - d > 0,
\]
that is, $t' < k' < \binom{n-1}{2}$. The design $(X \setminus S_x, \cB')$ is
not empty, because every $t'$-subset of $X \setminus S_x$ lies in exactly
$\lambda \geq 1$ members of $\cB'$. By the definition of nontriviality, it
therefore either is nontrivial or contains the complete design. Suppose
finally that $(X \setminus S_x, \cB')$ contains the complete design. Let $R$
be a $t'$-subset of $X \setminus S_x$. Counted with multiplicity, at least
$\binom{\binom{n-1}{2} - t'}{k' - t'}$ members of $\cB'$ contain $R$, one
for each $k'$-subset containing $R$. By \eqref{eq:invariants}, this number
is $\binom{q}{d}$, so $\lambda \geq \binom{q}{d}$. Since $1 \leq d \leq q -
1$, we have $\lambda \geq \binom{q}{d} \geq \binom{q}{1} = q > t$, the last
inequality by \eqref{eq:qkt}. This proves the last assertion.
\end{proof}

Iterating the derivation gives the following reduction.

\begin{proposition}
\label{prop:descent}
Let $(X, \cB)$ be a nontrivial graphical $t$-$\bigl(\binom{n}{2}, k,
\lambda\bigr)$ design with $t \geq 2$. Then either $t < \lambda$, or there
is a nontrivial graphical $t^*$-$\bigl(\binom{n^*}{2}, k^*, \lambda\bigr)$
design with $2 \leq t^* \leq n^*$ whose parameters satisfy
\[
\binom{n^*}{2} - t^* = \binom{n}{2} - t
\qquad \text{and} \qquad
k^* - t^* = k - t.
\]
\end{proposition}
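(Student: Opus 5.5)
The plan is to iterate Lemma~\ref{lem:derivation} and stop as soon as the current design has strength at most its number of vertices. Start with $(X, \cB)$ and set $t_0 = t$, $n_0 = n$, $k_0 = k$. If $t_0 \leq n_0$, the original design already serves as the starred design: it is nontrivial, $t_0 \geq 2$, and the two identities hold trivially. Otherwise $t_0 \geq n_0 + 1$, and Lemma~\ref{lem:derivation} produces a graphical $t_1$-$\bigl(\binom{n_1}{2}, k_1, \lambda\bigr)$ design with $n_1 = n_0 - 1$, $t_1 = t_0 - (n_0 - 1)$, $k_1 = k_0 - (n_0 - 1)$. By \eqref{eq:invariants}, $q$ and $d$ are unchanged. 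The lemma leaves two cases.

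\emph{Case 1: the derived design contains the complete design.} Then Lemma~\ref{lem:derivation} gives $\lambda \geq \binom{q}{d} \geq q > t$. Since $q$ is the same for the original design and every derived design, this is a bound in terms of the original $t$, so $t < \lambda$ and we stop.

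\emph{Case 2: the derived design is nontrivial.} First check that $t_1 \geq 2$. From $t_0 \geq n_0 + 1$ we get $t_1 = t_0 - n_0 + 1 \geq 2$. The derived design is nontrivial and graphical with the same $q$ and $d$, so we repeat the test at the new level. If $t_1 \leq n_1$ we stop with $(t^*, n^*, k^*) = (t_1, n_1, k_1)$. If not, we apply Lemma~\ref{lem:derivation} again.

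The iteration terminates because $n$ drops by one at each step. We never reach a degenerate small $n$: a nontrivial design needs $q > 0$, and at every stage $t_i \geq 2$ while $t_i < q$. More directly, whenever we derive we have $t_i \geq n_i + 1$, and $t_i < \binom{n_i}{2}$, so $n_i \geq 3$ or so. In any case $n_i$ is a decreasing sequence of positive integers, so eventually either Case 1 occurs or $t_i \leq n_i$.

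Along the chain, the invariants $\binom{n_i}{2} - t_i = q$ and $k_i - t_i = d$ hold by induction from \eqref{eq:invariants}. This gives the stated identities for the final design.

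The argument has no real obstacle; the main care points are the following.
\begin{itemize}
\item The inequality $\lambda > t$ in Case 1 refers to the original $t$. This holds because $q$ is invariant and the original design satisfies $q > t$ by \eqref{eq:qkt}.
\item The bound $t^* \geq 2$ must be checked at each step, as done above.
\item Termination must be argued from the strictly decreasing $n_i$.
\end{itemize}
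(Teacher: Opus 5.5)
Your proposal is correct and follows essentially the same argument as the paper. You iterate Lemma~\ref{lem:derivation} while the strength exceeds the number of vertices, handle the complete-design outcome via the invariance of $q$ and $d$ together with $t < q$ for the original design, and check $t^* \geq 2$ via $t_i - (n_i - 1) \geq 2$. The only difference is cosmetic: you argue termination from the decrease of $n_i$ rather than of the strength, and your hedge ``$n_i \geq 3$ or so'' can be made exact, since a derivation step requires $n_i + 1 \leq t_i < \binom{n_i}{2}$, which forces $n_i \geq 4$.
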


\begin{proof}
Put $q = \binom{n}{2} - t$ and $d = k - t$. Apply Lemma~\ref{lem:derivation}
repeatedly, for as long as the current design is nontrivial and its strength
exceeds its number of vertices. We call this repeated derivation the
\emph{full-star descent}. In the notation of Lemma~\ref{lem:derivation},
each derived design satisfies $t' < k' < \binom{n-1}{2}$, so it either
contains the complete design or is again nontrivial. By
\eqref{eq:invariants}, every design along the descent has the same $q$ and
the same $d$ as the given design. Since the strength strictly decreases, the
descent terminates. There are two possible outcomes.

Suppose first that, at some stage, a derived design contains the complete
design. Lemma~\ref{lem:derivation}, applied to the design from which it was
derived, gives $\lambda \geq \binom{q}{d} \geq q$, since that design has the
same $q$ and $d$ as $(X, \cB)$. By \eqref{eq:qkt} for $(X, \cB)$, we have $t
< q$. Hence $t < \lambda$.

Otherwise, the descent stops at a nontrivial graphical $t^*$-design whose
strength $t^*$ is at most its number of vertices $n^*$. We call this design
the \emph{terminal design}. Its strength is at least $2$: if no derivation
was performed, this is the hypothesis $t \geq 2$, and otherwise the last
derivation started from a design whose strength $s$ exceeded its number of
vertices $m$, so the terminal strength is $s - (m - 1) \geq 2$. The terminal
design has the same $q$ and $d$ as $(X, \cB)$, that is,
\[
\binom{n^*}{2} - t^* = \binom{n}{2} - t
\qquad \text{and} \qquad
k^* - t^* = k - t.
\qedhere
\]
\end{proof}

\begin{corollary}
\label{cor:range}
A nontrivial graphical $t$-$\bigl(\binom{n}{2}, k, \lambda\bigr)$ design
with $2 \leq t \leq n$ satisfies
\begin{equation}
\label{eq:range}
2 \leq t \leq n < 2t + \lambda + 4.
\end{equation}
\end{corollary}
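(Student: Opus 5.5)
The corollary is essentially a direct combination of the hypothesis with Chouinard's bound, so the plan is short. The left part of \eqref{eq:range}, $2 \leq t \leq n$, is exactly the hypothesis on the strength and requires no argument. For the right part, I would invoke Theorem~\ref{thm:chouinard}. Its hypotheses are that the design is a nontrivial graphical $t$-$\bigl(\binom{n}{2}, k, \lambda\bigr)$ design with $t > 1$. Nontriviality and graphicality are assumed, and $t \geq 2$ gives $t > 1$. The theorem then yields $n < 2t + \lambda + 4$ immediately. Chaining the two inequalities gives \eqref{eq:range}.

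The only point to check is that the notion of nontriviality used in the paper agrees with the one under which Theorem~\ref{thm:chouinard} is stated. Here a nontrivial design is not empty, does not contain the complete design, and satisfies $t < k < \binom{n}{2}$. I would note that this is the specialization of Chouinard's hypothesis in \cite[Theorem 2.1]{Chouinard1996} to $t$-designs, as the paper asserts when quoting the theorem. In particular, no simplicity assumption is needed.

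There is no real obstacle. Its role is to record that, after the reduction of Proposition~\ref{prop:descent}, the terminal design has $n^*$ and $t^*$ linearly related, $t^* \leq n^* < 2t^* + \lambda + 4$. Consequently $\binom{n^*}{2} = \Theta(t^{*2})$, which is the regime where the Ray--Chaudhuri--Wilson estimate is later applied.
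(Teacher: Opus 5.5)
Your proposal is correct and matches the paper's proof. The paper also takes $2 \leq t \leq n$ directly from the hypothesis and obtains $n < 2t + \lambda + 4$ immediately from Theorem~\ref{thm:chouinard}.
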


\begin{proof}
Immediate from Theorem~\ref{thm:chouinard}.
\end{proof}

\section{Proof of the Finiteness Theorem}
\label{sec:mainproof}

Graphicality makes the multiplicity $\mu_{\cB}$ constant on the isomorphism
classes of spanning subgraphs, and the strength makes the blocks containing
a fixed $t$-subset few. The following lemma combines the two facts into a
lower bound on the excess.

\begin{lemma}
\label{lem:km}
Let $(X, \cB)$ be a graphical $t$-$\bigl(\binom{n}{2}, k, \lambda\bigr)$
design, let $T$ be a spanning subgraph of $K_n$ with exactly $t$ edges and
$E(T) \neq E(K_n)$, and let $L$ be the smallest cardinality of an
$\Aut(T)$-orbit on $E(K_n) \setminus E(T)$. If a block $B$ contains $E(T)$
and $k > t$, then
\[
\lambda \geq |\Aut(T) \cdot B| \geq \frac{L}{k - t},
\]
where $\Aut(T) \cdot B = \{\sigma(B) : \sigma \in \Aut(T)\}$ is the orbit of
$B$, and hence $k - t \geq L/\lambda$.
\end{lemma}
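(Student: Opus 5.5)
We prove the two inequalities separately; the final consequence $k-t \geq L/\lambda$ then follows by rearranging.

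\emph{First inequality.} Every image $\sigma(B)$ with $\sigma \in \Aut(T)$ is again a block, since graphicality gives $\mu_{\cB}(\sigma(B)) = \mu_{\cB}(B) \geq 1$. Each such image contains $\sigma(E(T)) = E(T)$. The orbit $\Aut(T)\cdot B$ therefore consists of distinct blocks, each containing the $t$-subset $E(T)$ and each with multiplicity at least one. The design condition allows at most $\lambda$ members of $\cB$, counted with multiplicity, to contain $E(T)$. Hence $|\Aut(T)\cdot B| \leq \lambda$.

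\emph{Second inequality.} Write $D = B \setminus E(T)$. This is a set of $d = k - t \geq 1$ edges lying in $E(K_n)\setminus E(T)$, and it is nonempty because $k > t$. Choose an edge $e \in D$ and let $O = \Aut(T)\cdot e$ be its orbit, so that $|O| \geq L$. We double count the incidence pairs
\[
\{(B', f) : B' \in \Aut(T)\cdot B,\ f \in O \cap B'\}.
\]

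\begin{itemize}
\item Each $B'$ in the orbit has the form $\sigma(B)$, and every edge of $O \cap B'$ lies outside $E(T)$. So $O \cap B' \subseteq B' \setminus E(T) = \sigma(D)$, and each $B'$ contributes at most $d$ pairs. The total is at most $d\,|\Aut(T)\cdot B|$.
\item For each $f \in O$, pick $\sigma$ with $\sigma(e) = f$. Then $\sigma(B)$ is in the orbit and contains $f$, so each $f$ lies in at least one orbit member. The total is at least $|O| \geq L$.
\end{itemize}

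Combining the two counts gives $d\,|\Aut(T)\cdot B| \geq L$, which is $|\Aut(T)\cdot B| \geq L/(k-t)$.

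The argument is elementary throughout. The only point needing care is the second inequality: the double count must be restricted to the edges of a single $\Aut(T)$-orbit. Comparing all excess edges directly would not yield the factor $L$.
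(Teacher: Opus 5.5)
Your proof is correct and follows essentially the same route as the paper. The first inequality is argued identically. Your double count over a single orbit $O$ is just a restatement of the paper's observation that the $\Aut(T)$-orbit of $e$ is covered by the sets $B' \setminus E(T)$, $B' \in \Aut(T)\cdot B$, each of cardinality $k-t$.
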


\begin{proof}
Since $T$ has exactly $t$ edges and $(X, \cB)$ has strength $t$, the members
of $\cB$ containing $E(T)$ number exactly $\lambda$, counted with
multiplicity, so at most $\lambda$ distinct blocks contain $E(T)$. For
$\sigma \in \Aut(T)$, graphicality gives $\mu_{\cB}(\sigma(B)) =
\mu_{\cB}(B) \geq 1$, and $E(T) = \sigma(E(T)) \subseteq \sigma(B)$, so
every member of $\Aut(T) \cdot B$ is a block containing $E(T)$. Hence
$|\Aut(T) \cdot B| \leq \lambda$. Let $e \in B \setminus E(T)$. Every edge
$\sigma(e)$ with $\sigma \in \Aut(T)$ lies in $\sigma(B) \setminus E(T) =
\sigma(B \setminus E(T))$, so the $\Aut(T)$-orbit of $e$ is contained in the
union of the sets $B' \setminus E(T)$, where $B'$ runs over $\Aut(T) \cdot
B$. Each of these sets has cardinality $k - t$. Hence $L \leq |\Aut(T) \cdot
B|\,(k - t)$.
\end{proof}

We seek a spanning $t$-edge graph $T$ whose automorphism group has only
orbits of cardinality $\Omega(t^2)$ on $E(K_n) \setminus E(T)$. We construct
$T$ as a disjoint union of copies of $K_1$, $K_2$, and $K_4$.

\begin{lemma}
\label{lem:testgraph}
For all sufficiently large $t$, and all $n$ with $t \leq n$, there exist
positive integers $m_1, m_2, m_4 = \Omega(t)$ such that
\[
T = m_1 K_1 \dcup m_2 K_2 \dcup m_4 K_4
\]
is a spanning $t$-edge graph on $n$ vertices.
\end{lemma}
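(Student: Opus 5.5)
The plan is to solve the two linear constraints directly by choosing $m_2$ in a suitable residue class modulo $6$ and then computing $m_4$ and $m_1$ from it. The graph $T = m_1 K_1 \dcup m_2 K_2 \dcup m_4 K_4$ has $m_1 + 2m_2 + 4m_4$ vertices and $m_2 + 6m_4$ edges. So we need positive integers with
\[
m_2 + 6m_4 = t \qquad \text{and} \qquad m_1 = n - 2m_2 - 4m_4,
\]
each of size at least $ct$ for some absolute constant $c > 0$ once $t$ is large. The only real constraint is the vertex budget. The worst case is $n = t$, since a larger $n$ only increases $m_1$. The copies of $K_4$ are the most vertex-efficient, with $3/2$ edges per vertex, whereas $K_2$ gives only $1/2$. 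Hence $m_2$ should be a small but linear fraction of $t$, and most edges should come from $K_4$'s.

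Concretely, I let $m_2$ be the unique integer in $\bigl[\lfloor t/8 \rfloor, \lfloor t/8 \rfloor + 5\bigr]$ with $m_2 \equiv t \pmod 6$. I then put $m_4 = (t - m_2)/6$, which is an integer by the congruence, and $m_1 = n - 2m_2 - 4m_4$. By construction the edge count is $m_2 + 6m_4 = t$ and the vertex count is $n$. The estimates are then routine:
\begin{itemize}
\item $m_2 \geq \lfloor t/8 \rfloor = \Omega(t)$.
\item $m_4 \geq (t - t/8 - 5)/6 = \Omega(t)$, roughly $7t/48$.
\item Using $n \geq t$,
\[
m_1 \geq t - 2m_2 - \tfrac{2}{3}(t - m_2) = \tfrac{t}{3} - \tfrac{4}{3} m_2 \geq \tfrac{t}{3} - \tfrac{4}{3}\Bigl(\tfrac{t}{8} + 5\Bigr) = \tfrac{t}{6} - \tfrac{20}{3}.
\]
\end{itemize}
Thus $m_1 = \Omega(t)$, and all three quantities are positive for, say, $t \geq 48$.

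The only step requiring care is the vertex-budget inequality for $m_1$, i.e. choosing the proportion of $K_2$'s small enough that the graph fits inside $t$ vertices with a linear number to spare. The computation above handles it, and I would simply record explicit constants (for instance $m_1 \geq t/7$, $m_2 \geq t/9$ and $m_4 \geq t/7$ for $t$ large). These give the explicit bounds needed later for the orbit sizes on $E(K_n) \setminus E(T)$.
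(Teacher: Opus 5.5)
Your construction is correct and is essentially the paper's: both solve $m_2 + 6m_4 = t$ and $m_1 + 2m_2 + 4m_4 = n$ directly with one free parameter of order $t$, treating $n = t$ as the worst case. The paper instead takes $m_4 = \lceil t/7 \rceil$ as the free parameter, so $m_2 = t - 6m_4$ is automatically an integer and no residue class modulo $6$ is needed, with $m_1 = n - 2t + 8m_4$. That choice balances all three counts at about $t/7$, which is optimal when $n = t$, and Lemma~\ref{lem:testgraph2} and the threshold $10^5$ in Theorem~\ref{thm:explicit} rely on it. Your $m_2 \approx t/8$, and a fortiori the recorded $m_2 \geq t/9$, suffices for the $\Omega(t)$ statement but would not support that threshold without raising it.
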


\begin{proof}
Put
\[
r = n - t \geq 0
\qquad \text{and} \qquad
M = \left\lceil \frac{t}{7} \right\rceil,
\]
and set
\[
m_1 = r + 8M - t, \qquad m_2 = t - 6M, \qquad \text{and} \qquad m_4 = M.
\]
All three are integers. The number of edges of $T$ is
\[
m_2 + 6m_4 = (t - 6M) + 6M = t,
\]
and its number of vertices is
\begin{align*}
m_1 + 2m_2 + 4m_4 &= (r + 8M - t) + (2t - 12M) + 4M \\
&= r + t \\
&= n.
\end{align*}
For the lower bounds, $M \geq t/7$ gives
\[
m_1 = r + 8M - t \geq \frac{t}{7}
\qquad \text{and} \qquad
m_4 = M \geq \frac{t}{7},
\]
while $M \leq (t + 6)/7$ gives
\[
m_2 = t - 6M \geq \frac{t - 36}{7},
\]
which is positive for $t \geq 72$. Thus, for $t \geq 72$, the graph $T$ has
the required properties, with all three counts at least $(t - 36)/7$.
\end{proof}

Let $T = m_1 K_1 \dcup m_2 K_2 \dcup m_4 K_4$ be a spanning subgraph of
$K_n$. For each $p \in \{1, 2, 4\}$, let $C_1, \ldots, C_{m_p}$ be the
$K_p$-components of $T$, and write the vertex set of $C_i$ as $\{x_{i,1},
\ldots, x_{i,p}\}$. Each permutation $\pi$ of $\{1, \ldots, m_p\}$ defines
the element of $\Sym(V)$ that maps $x_{i,j}$ to $x_{\pi(i),j}$ for all $i$
and $j$ and fixes every vertex outside the $K_p$-components. These elements
preserve $E(T)$ and form a subgroup of $\Aut(T)$ isomorphic to the symmetric
group of degree $m_p$, which we denote $S_{m_p}$.

\begin{lemma}
\label{lem:orbitbound}
Let $T = m_1 K_1 \dcup m_2 K_2 \dcup m_4 K_4$ be a spanning subgraph of
$K_n$ with $m_1, m_2, m_4 \geq 2$. Then every $\Aut(T)$-orbit on $E(K_n)
\setminus E(T)$ has cardinality at least $\binom{m}{2}$, where $m =
\min(m_1, m_2, m_4)$.
\end{lemma}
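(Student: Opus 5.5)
We classify the edges of $K_n$ not in $E(T)$ by the types of the components containing their two endpoints. Each such edge $\{u,w\}$ either joins two distinct components, of types $K_p$ and $K_{p'}$ with $p,p' \in \{1,2,4\}$, or lies inside a single component. The second case is impossible, because every component of $T$ is complete. So every non-edge of $T$ joins two distinct components. We then bound from below the orbit of a single such edge under the subgroup $S_{m_1} \times S_{m_2} \times S_{m_4}$ of $\Aut(T)$ described just before the lemma. These three factors move disjoint sets of vertices and commute, so they generate their direct product inside $\Aut(T)$. An orbit of $\Aut(T)$ contains the orbit of this subgroup, so a lower bound for the subgroup orbit suffices.

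Consider first the case $p = p'$. Write the edge as $\{x_{i,j}, x_{i',j'}\}$ with $i \neq i'$ among the $K_p$-components. Permutations $\pi \in S_{m_p}$ send it to $\{x_{\pi(i),j}, x_{\pi(i'),j'}\}$. Distinct unordered pairs $\{\pi(i), \pi(i')\}$ give distinct edges, since the component indices of the endpoints are determined by the edge. As $\pi$ ranges over $S_{m_p}$, these pairs cover all $2$-subsets of $\{1,\ldots,m_p\}$, so the orbit has at least $\binom{m_p}{2} \geq \binom{m}{2}$ elements.

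Now consider $p \neq p'$, with the edge $\{x_{i,j}, y_{i',j'}\}$, where $x$ lies in a $K_p$-component and $y$ in a $K_{p'}$-component. Here we use $S_{m_p} \times S_{m_{p'}}$ acting independently on the two indices. The ordered pair $(\pi(i), \pi'(i'))$ can then be any element of $\{1,\ldots,m_p\} \times \{1,\ldots,m_{p'}\}$, and distinct pairs give distinct edges because the two endpoints lie in components of different types. The orbit therefore has size at least $m_p m_{p'} \geq m^2 \geq \binom{m}{2}$.

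The only point that needs care is checking that distinct index choices really give distinct edges. This holds because the images of the endpoints are determined by the component index together with the position $j$, and the position is preserved by the permutation. The hypothesis $m_p \geq 2$ is used only so that $\binom{m}{2}$ is a meaningful positive lower bound. There is no genuine obstacle beyond this bookkeeping.
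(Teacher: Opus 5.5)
Your proof is correct and follows essentially the same route as the paper's. Both arguments note that every non-edge of $T$ joins two distinct components because the components are complete. Both then use transitivity of $S_{m_p}$ on pairs of $K_p$-components to get $\binom{m_p}{2}$, and the independent actions of $S_{m_p}$ and $S_{m_{p'}}$ to get $m_p m_{p'} \geq m^2 \geq \binom{m}{2}$. Your extra check that distinct component indices give distinct edges is left implicit in the paper.
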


\begin{proof}
Let $\{x, y\} \in E(K_n) \setminus E(T)$. The vertices $x$ and $y$ lie in
two distinct components of $T$, because each component is complete. Suppose
first that these components have distinct types $K_p$ and $K_q$, with $p
\neq q$. The groups $S_{m_p}$ and $S_{m_q}$ are transitive on the
$K_p$-components and on the $K_q$-components, and they move $x$ and $y$
independently. The $\Aut(T)$-orbit of $\{x, y\}$ therefore contains, for
each choice of a $K_p$-component and a $K_q$-component, an edge joining
them, and so has cardinality at least
\begin{equation}
\label{eq:orbitdistinct}
m_p m_q.
\end{equation}
Suppose now that the two components have the same type $K_p$. Since $m_p
\geq 2$, the group $S_{m_p}$ is $2$-transitive on the $K_p$-components. The
$\Aut(T)$-orbit of $\{x, y\}$ therefore contains, for each pair of distinct
$K_p$-components, an edge joining them, and so has cardinality at least
\begin{equation}
\label{eq:orbitsame}
\binom{m_p}{2}.
\end{equation}
Both \eqref{eq:orbitdistinct} and \eqref{eq:orbitsame} are at least
$\binom{m}{2}$, since $m_p m_q \geq m^2$. Since $\{x, y\}$ was arbitrary,
every $\Aut(T)$-orbit on $E(K_n) \setminus E(T)$ has cardinality at least
$\binom{m}{2}$.
\end{proof}

Finally, we bound the block sizes of the designs satisfying
\eqref{eq:range}.

\begin{proposition}
\label{prop:excess}
Every nontrivial graphical $t$-$\bigl(\binom{n}{2}, k, \lambda\bigr)$ design
satisfying \eqref{eq:range} has
\begin{equation}
\label{eq:kbound}
k \leq e \lambda^{1/t} v^{1 - s/t} s^{s/t},
\qquad \text{where} \qquad
v = \binom{n}{2}
\quad \text{and} \quad
s = \left\lfloor \frac{t}{2} \right\rfloor.
\end{equation}
In particular, $k = O(t^{3/2})$.
\end{proposition}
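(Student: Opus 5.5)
The plan is to combine the block count \eqref{eq:blocks} with Theorem~\ref{thm:rcw} and then invert the resulting inequality to bound $k$. Put $v = \binom{n}{2}$ and $s = \lfloor t/2 \rfloor$.

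First we check the hypothesis $v > k + s$ of Theorem~\ref{thm:rcw}. Lemma~\ref{lem:vkt} gives $v > k + t \geq k + s$, so the hypothesis holds, and Theorem~\ref{thm:rcw} together with \eqref{eq:blocks} yields
\[
\lambda \frac{\binom{v}{t}}{\binom{k}{t}} \geq \binom{v}{s},
\qquad\text{that is,}\qquad
\binom{k}{t} \leq \lambda \frac{\binom{v}{t}}{\binom{v}{s}}.
\]

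Next we estimate both sides by elementary binomial bounds. On the left, $\binom{k}{t} \geq (k/t)^t$. On the right, $\binom{v}{t} \leq v^t/t! \leq (ev/t)^t$ and $\binom{v}{s} \geq (v/s)^s$. Substituting gives
\[
\frac{k^t}{t^t} \leq \lambda \frac{e^t v^t}{t^t}\cdot \frac{s^s}{v^s},
\]
and the factors $t^t$ cancel. Taking $t$-th roots then gives exactly $k \leq e\lambda^{1/t} v^{1-s/t} s^{s/t}$, which is \eqref{eq:kbound}. The only point needing care is to keep the same $t^t$ on both sides so that it cancels cleanly. This part is routine.

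For the asymptotic statement we use \eqref{eq:range}. It gives $n < 2t+\lambda+4 = O(t)$, so $v = O(t^2)$. Since $s \geq (t-1)/2$, we have $1 - s/t \leq 1/2 + 1/(2t)$, and therefore $v^{1-s/t} \leq v^{1/2} v^{1/(2t)} = O(t)$, because $v^{1/(2t)}$ is bounded. Also $s^{s/t} \leq s^{1/2} = O(t^{1/2})$ and $\lambda^{1/t} \leq \lambda$. Multiplying these gives $k = O(t^{3/2})$. The main obstacle, a mild one, is controlling the factor $v^{1/(2t)}$ for odd $t$; it is bounded because $v$ is polynomial in $t$, and $\lambda$ is fixed.
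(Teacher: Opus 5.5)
Your proposal is correct and follows the paper's proof essentially step for step. Both arguments use Lemma~\ref{lem:vkt} to verify $v > k + s$, Theorem~\ref{thm:rcw} with \eqref{eq:blocks}, the same three binomial estimates, and the same control of $v^{1-s/t}$, $s^{s/t}$, and $\lambda^{1/t}$ via \eqref{eq:range}. The only detail worth stating explicitly is that \eqref{eq:range} gives $t \geq 2$, hence $s \geq 1$, which both Theorem~\ref{thm:rcw} and the estimate $\binom{v}{s} \geq (v/s)^s$ require.
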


\begin{proof}
The design has $v = \binom{n}{2}$ points, and $s \geq 1$ because $t \geq 2$
by \eqref{eq:range}. Lemma~\ref{lem:vkt} gives $v > k + t \geq k + s$, and
Theorem~\ref{thm:rcw} yields $b \geq \binom{v}{s}$. Combining this with
\eqref{eq:blocks}, we obtain
\[
\lambda \frac{\binom{v}{t}}{\binom{k}{t}} \geq \binom{v}{s}.
\]
The elementary estimates
\[
\binom{v}{t} \leq \left( \frac{ev}{t} \right)^t,
\qquad
\binom{k}{t} \geq \left( \frac{k}{t} \right)^t,
\qquad \text{and} \qquad
\binom{v}{s} \geq \left( \frac{v}{s} \right)^s
\]
give
\[
\lambda \left( \frac{ev}{k} \right)^t \geq \left( \frac{v}{s} \right)^s.
\]
Solving for $k$ gives \eqref{eq:kbound}. By \eqref{eq:range}, $v =
\binom{n}{2} = \Theta(t^2)$. Since $s \leq t/2$ and $1 - s/t \leq
\frac{1}{2} + \frac{1}{2t}$, while $v^{1/(2t)}$ and $\lambda^{1/t}$ are
$O(1)$, the right-hand side of \eqref{eq:kbound} is $O\bigl(v^{1/2}
t^{1/2}\bigr) = O(t^{3/2})$.
\end{proof}

\begin{proof}[Proof of Theorem~\ref{thm:main}]
We first bound the strengths of the nontrivial graphical $t$-designs of
index $\lambda$ satisfying \eqref{eq:range}. Consider such a design with $t$
sufficiently large. Since $t \leq n$, Lemma~\ref{lem:testgraph} provides a
spanning $t$-edge graph $T = m_1 K_1 \dcup m_2 K_2 \dcup m_4 K_4$ with $m_1,
m_2, m_4 = \Omega(t)$, and Lemma~\ref{lem:orbitbound} shows that every
$\Aut(T)$-orbit on $E(K_n) \setminus E(T)$ has cardinality at least
$\binom{m}{2} = \Omega(t^2)$, where $m = \min(m_1, m_2, m_4)$. Since $(X,
\cB)$ has strength $t$ and $T$ has $t$ edges, some block $B$ contains
$E(T)$, and $k > t$ by nontriviality. By Lemma~\ref{lem:km} and
Proposition~\ref{prop:excess},
\[
\lambda \geq |\Aut(T) \cdot B| \geq \frac{\binom{m}{2}}{k - t}
= \frac{\Omega(t^2)}{O(t^{3/2})} = \Omega(t^{1/2}).
\]
The orbit of $B$ thus grows without bound while the design condition caps
its cardinality at $\lambda$, which is impossible for all sufficiently large
$t$. Hence there is a bound $S_\lambda$, depending only on $\lambda$, such
that every nontrivial graphical $t$-design of index $\lambda$ satisfying
\eqref{eq:range} has $t < S_\lambda$.

It remains to transfer this bound from the range \eqref{eq:range} to an
arbitrary nontrivial graphical $t$-design. Consider a nontrivial graphical
$t$-design $(X, \cB)$ of index $\lambda$ with $t \geq 2$ that does not
satisfy \eqref{eq:range}. By Proposition~\ref{prop:descent}, either $t <
\lambda$, or there is a nontrivial graphical $t^*$-$\bigl(\binom{n^*}{2},
k^*, \lambda\bigr)$ design with $2 \leq t^* \leq n^*$ and $\binom{n^*}{2} -
t^* = \binom{n}{2} - t$. In the second case, that design satisfies
\eqref{eq:range} by Corollary~\ref{cor:range}, so $t^* < S_\lambda$, and
Theorem~\ref{thm:chouinard} gives $n^* < 2t^* + \lambda + 4$. Both $t^*$ and
$n^*$ are therefore bounded in terms of $\lambda$, and hence so is
$\binom{n^*}{2} - t^*$, which equals $\binom{n}{2} - t$ by
Proposition~\ref{prop:descent}. Since $t < \binom{n}{2} - t$ by
\eqref{eq:qkt}, the strength $t$ is bounded in terms of $\lambda$ in this
case as well.

The strength of every nontrivial graphical $t$-design of index $\lambda$
with $t \geq 2$ is therefore bounded in terms of $\lambda$.
Theorem~\ref{thm:chouinard} then bounds $n$, and $k < \binom{n}{2}$ bounds
$k$, so only finitely many parameter quadruples $(n, t, k, \lambda)$ occur.
For each fixed $n$ and $k$, a $t$-design on $E(K_n)$ is determined by its
multiplicity function
\[
\mu_{\cB} : \binom{E(K_n)}{k} \longrightarrow \{0, 1, \ldots, \lambda\},
\]
where $\mu_{\cB}(B) \leq \lambda$ because any $t$-subset of $B$ lies in
exactly $\lambda$ members of $\cB$, of which $B$ accounts for
$\mu_{\cB}(B)$. There are only finitely many such functions. The result
follows.
\end{proof}

\section{Explicit Bounds}
\label{sec:explicitsec}

Every quantity that the proof of Theorem~\ref{thm:main} bounds
asymptotically is now bounded numerically. Numerical constants alone,
however, would retain the factor $\lambda$ of Lemma~\ref{lem:km}. To obtain
a sufficiently strong bound on the strengths of the designs satisfying
\eqref{eq:range}, we sharpen Lemma~\ref{lem:km}: a large alternating
subgroup of $\Aut(T)$ fixes every block containing $E(T)$ setwise, so $B
\setminus E(T)$ is a union of large orbits, and the factor $\lambda$
disappears from the lower bound on $k - t$. We begin with the counts of the
test graph, which the proof of Lemma~\ref{lem:testgraph} already provides.

\begin{lemma}
\label{lem:testgraph2}
For $72 \leq t \leq n$, the graph $T$ of Lemma~\ref{lem:testgraph} may be
chosen with
\[
m_1, m_4 \geq \frac{t}{7}
\qquad \text{and} \qquad
m_2 \geq \frac{t - 36}{7}.
\]
\end{lemma}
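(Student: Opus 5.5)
This statement is a direct readout of the construction in the proof of Lemma~\ref{lem:testgraph}, so the plan is simply to reuse that construction and record the explicit inequalities it already gives. Put $r = n - t \geq 0$ and $M = \lceil t/7 \rceil$. Take
\[
m_1 = r + 8M - t, \qquad m_2 = t - 6M, \qquad m_4 = M,
\]
and let $T = m_1 K_1 \dcup m_2 K_2 \dcup m_4 K_4$. As computed there, $T$ has exactly $m_2 + 6m_4 = t$ edges and exactly $m_1 + 2m_2 + 4m_4 = n$ vertices, so it is a spanning $t$-edge subgraph of $K_n$.

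The bounds then come from the two sides of the ceiling, $t/7 \leq M \leq (t+6)/7$.
\begin{itemize}
\item The lower estimate $M \geq t/7$ gives $m_4 \geq t/7$ directly.
\item The same estimate gives $m_1 = r + 8M - t \geq 8t/7 - t = t/7$, using $r \geq 0$, which holds because $t \leq n$.
\item The upper estimate $M \leq (t+6)/7$ gives $m_2 = t - 6M \geq t - 6(t+6)/7 = (t-36)/7$.
\end{itemize}

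It remains to check that these really are positive integers, so that $T$ is a genuine graph of the stated form. For $t \geq 72$ we have $(t-36)/7 \geq 36/7 > 0$, and $t/7 > 0$, so all three counts are positive. They are integers by definition.

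No step is a real obstacle. The only point needing care is where the hypotheses enter: $t \leq n$ is used through $r \geq 0$ in the bound for $m_1$, and $t \geq 72$ is used only to make $m_2$ positive.
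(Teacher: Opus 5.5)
Your proposal is correct and matches the paper's argument: the paper's proof just notes that the choice $M = \lceil t/7 \rceil$ in the proof of Lemma~\ref{lem:testgraph} yields these bounds. Those bounds follow, as you derive them, from $t/7 \leq M \leq (t+6)/7$ together with $r = n - t \geq 0$.
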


\begin{proof}
The choice $M = \lceil t/7 \rceil$ in the proof of Lemma~\ref{lem:testgraph}
gives exactly these bounds for $t \geq 72$.
\end{proof}

We next isolate the numerical effect of the full-star descent: a bound $S$
on the strengths of the designs satisfying \eqref{eq:range} yields explicit
bounds on all the parameters of all designs.

\begin{proposition}
\label{prop:transfer}
Let $S \geq 7\lambda + 43$, and suppose that every nontrivial graphical
$t$-design of index $\lambda$ satisfying \eqref{eq:range} has $t < S$. Then
every nontrivial graphical $t$-$\bigl(\binom{n}{2}, k, \lambda\bigr)$ design
with $t \geq 2$ satisfies
\[
t < 2.3\, S^2,
\qquad
n < 4.7\, S^2,
\qquad \text{and} \qquad
k < 11.1\, S^4.
\]
\end{proposition}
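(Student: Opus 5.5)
The plan is to repeat the transfer step from the proof of Theorem~\ref{thm:main}, keeping track of constants. The hypothesis $S \geq 7\lambda + 43$ is what lets us absorb every appearance of $\lambda$ into a multiple of $S$. Concretely it gives $\lambda \leq (S-43)/7$, so $\lambda + 4 < S/7$.

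Let $(X,\cB)$ be a nontrivial graphical $t$-$\bigl(\binom{n}{2},k,\lambda\bigr)$ design with $t\geq 2$. I first bound $t$ and then read off $n$ and $k$. There are three cases.

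\emph{Case 1: the design satisfies \eqref{eq:range}.} Then $t<S$ by hypothesis.

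\emph{Case 2: Proposition~\ref{prop:descent} gives $t<\lambda$.} Then $t < \lambda < S$.

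\emph{Case 3: Proposition~\ref{prop:descent} gives a terminal design.} This is a nontrivial graphical $t^*$-$\bigl(\binom{n^*}{2},k^*,\lambda\bigr)$ design with $2\leq t^*\leq n^*$. By Corollary~\ref{cor:range} it satisfies \eqref{eq:range}, so $t^*<S$. Theorem~\ref{thm:chouinard} then gives
\[
n^* < 2t^*+\lambda+4 < 2S + S/7 = \tfrac{15}{7}S .
\]
Proposition~\ref{prop:descent} also preserves $q$. By \eqref{eq:qkt} for the original design, $t<q$, and so
\[
t < q = \binom{n^*}{2}-t^* < \frac{(n^*)^2}{2} < \frac{225}{98}S^2 < 2.3\,S^2 .
\]
Since $S\geq 43$, the bound $S < 2.3\,S^2$ from Cases~1 and~2 is also covered. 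Hence $t<2.3\,S^2$ in every case.

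Next comes $n$. Theorem~\ref{thm:chouinard} applies to the original design, giving
\[
n < 2t+\lambda+4 < 4.6\,S^2 + S/7 .
\]
Because $S\geq 43$, we have $S/7 \leq 0.1\,S^2$, so $n<4.7\,S^2$.

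Finally $k$. Nontriviality gives $k<\binom{n}{2}<n^2/2$, hence
\[
k < \tfrac{(4.7)^2}{2}\,S^4 = 11.045\,S^4 < 11.1\,S^4 .
\]

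The only delicate point is the constant $2.3$ in Case~3. It needs the strict inequality $\lambda+4 < S/7$, which is exactly what $S\geq 7\lambda+43$ provides, together with the crude estimate $\binom{n^*}{2}<(n^*)^2/2$. With those two ingredients, $225/98 \approx 2.296$ sits just under $2.3$. Everything else is direct substitution into results already proved: Proposition~\ref{prop:descent}, Corollary~\ref{cor:range}, Theorem~\ref{thm:chouinard} and \eqref{eq:qkt}.
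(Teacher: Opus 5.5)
Your proposal is correct and follows the paper's proof essentially step for step. You bound $t$ through the terminal design of the descent via $t < q = \binom{n^*}{2} - t^* < (15S/7)^2/2$, then apply Chouinard's bound and $k < n^2/2$. The only cosmetic difference is that you absorb $\lambda + 4 < S/7$ directly, whereas the paper uses the integrality $t^* \le S - 1$ together with $\lambda + 2 \le S/7$; both routes give $n^* < 15S/7$.
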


\begin{proof}
A design that satisfies \eqref{eq:range} has $t < S \leq 2.3\, S^2$.
Consider a nontrivial graphical $t$-design $(X, \cB)$ of index $\lambda$
with $t \geq 2$ that does not satisfy \eqref{eq:range}, and apply the
descent of Proposition~\ref{prop:descent}. If, at some stage, the derived
design contains the complete design, then $t < \lambda < S$, as in the proof
of Proposition~\ref{prop:descent}. Otherwise, the terminal design satisfies
\eqref{eq:range} by Corollary~\ref{cor:range}, so its parameters satisfy
$t^* < S$ and
\[
n^* < 2t^* + \lambda + 4 \leq 2S + \lambda + 2 \leq \frac{15S}{7},
\]
using $\lambda + 2 \leq S/7$. The quantity $q = \binom{n}{2} - t$ of $(X,
\cB)$ is invariant under the descent, so $q = \binom{n^*}{2} - t^* <
\binom{n^*}{2}$, and \eqref{eq:qkt}, applied to $(X, \cB)$ itself, gives $t
< q$. Hence
\[
t < q < \binom{n^*}{2} < \frac{(15S/7)^2}{2} < 2.3\, S^2.
\]
Thus, $t < 2.3\, S^2$ in all cases. Theorem~\ref{thm:chouinard} then gives
\[
n < 2t + \lambda + 4 < 4.6\, S^2 + \frac{S}{7} < 4.7\, S^2,
\]
and $k < \binom{n}{2} < n^2/2 < 11.1\, S^4$.
\end{proof}

The next step replaces the asymptotic conclusion of
Proposition~\ref{prop:excess} by a numerical one.

\begin{proposition}
\label{prop:excess2}
Every nontrivial graphical $t$-$\bigl(\binom{n}{2}, k, \lambda\bigr)$ design
satisfying \eqref{eq:range} with $t \geq T_\lambda = \max(10^5,\ 7\lambda +
43)$ has
\[
k < 2.92\, t^{3/2}.
\]
\end{proposition}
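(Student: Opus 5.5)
The plan is to substitute explicit numerical estimates into the bound \eqref{eq:kbound} of Proposition~\ref{prop:excess}. That bound already holds for every design satisfying \eqref{eq:range}, so the only task is to bound each factor of $e\lambda^{1/t}v^{1-s/t}s^{s/t}$ numerically when $t \geq T_\lambda$. The rough target is $e\cdot\frac{15}{14}\,t^{3/2}\approx 2.912\,t^{3/2}$, which leaves the slack $2.92/2.912$ to absorb the factors that tend to $1$.

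First I bound $n$ and $v$ in terms of $t$ alone. Since $t \geq 7\lambda+43$, we have $\lambda+4 \leq (t-43)/7+4 < t/7$. Then \eqref{eq:range} gives $n < 2t+\lambda+4 < \frac{15}{7}t$, and hence $v=\binom{n}{2} < n^2/2 < \frac12\bigl(\frac{15}{7}\bigr)^2 t^2$. Similarly $\lambda < t/7$, so $\lambda^{1/t} \leq (t/7)^{1/t}$. This quantity is decreasing for $t \geq 10^5$, and at $t=10^5$ it is at most $\exp\bigl(\ln(14286)/10^5\bigr) < 1.0001$.

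Next I split by the parity of $t$. If $t$ is even, then $s=t/2$ and $v^{1-s/t}s^{s/t}=(vt/2)^{1/2} < \frac{15}{14}t^{3/2}$. If $t$ is odd, then $s=(t-1)/2$, and I write
\[
v^{1-s/t}s^{s/t} = (vs)^{1/2}\,(v/s)^{1/(2t)} .
\]
Here $(vs)^{1/2} \leq (vt/2)^{1/2} < \frac{15}{14}t^{3/2}$, as in the even case. For the correction factor, $v/s < \frac{(15/7)^2t^2}{t-1} \leq 5t$ for $t\geq 10^5$. Since $(5t)^{1/(2t)}$ is decreasing, it is at most $\exp\bigl(\ln(5\cdot 10^5)/(2\cdot10^5)\bigr) < 1.0001$. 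The two parity cases could probably be unified by the single crude estimate $s^{s/t} \leq s^{1/2}$ together with $v^{1/(2t)}$ being tiny, but the split keeps the constants visible.

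Multiplying the factors gives
\[
k \leq e\cdot 1.0001\cdot1.0001\cdot\tfrac{15}{14}\,t^{3/2} < 2.92\,t^{3/2},
\]
because $e\cdot\frac{15}{14}\approx 2.9124$. The only delicate point, and the main obstacle, is the bookkeeping of these near-$1$ factors. I must check that each is monotone in $t$, so that evaluating at $t=10^5$ suffices, and that the strict inequality $n<\frac{15}{7}t$ really follows from $t\geq 7\lambda+43$. This is precisely where the hypothesis $T_\lambda=\max(10^5,7\lambda+43)$ is used: the bound $7\lambda+43$ controls $\lambda$ relative to $t$, and $10^5$ makes the $1/t$-th roots negligible.
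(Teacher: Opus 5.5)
Your proof is correct and follows the same route as the paper. You derive $n < \frac{15}{7}t$ from $t \geq 7\lambda + 43$, substitute it into \eqref{eq:kbound}, and use $t \geq 10^5$ to make the $1/t$-th-root factors fit within the slack between $e \cdot \frac{15}{14} \approx 2.9124$ and $2.92$. The only difference is that the paper avoids the parity split: it uses $s \leq t/2$, $1 - s/t \leq \frac12 + \frac{1}{2t}$, and $\lambda < u = 2t + \lambda + 4$ to collect all correction factors into a single $u^{2/t} < 1.0003$, which is the unification you anticipated.
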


\begin{proof}
Put $u = 2t + \lambda + 4$. Since $t \geq 7\lambda + 43$, we have $\lambda +
4 \leq (t - 15)/7$ and hence $u \leq 15t/7$. Also, $n < u$ by
\eqref{eq:range}, so $v < u^2/2$. Substituting the bounds $s \leq t/2$, $1 -
s/t \leq \frac{1}{2} + \frac{1}{2t}$, $v < u^2/2$, and $\lambda < u$ into
\eqref{eq:kbound} gives
\[
k \leq e \lambda^{1/t} v^{1 - s/t} s^{s/t}
< e\, u^{2/t}\, \frac{u \sqrt{t}}{2}.
\]
The function $2\ln(15t/7)/t$ is decreasing for $t \geq 2$, and its value at
$t = 10^5$ is less than $0.000246$. For $t \geq 10^5$, we have
\[
u^{2/t} \leq \exp\left( \frac{2 \ln(15t/7)}{t} \right)
\leq e^{0.000246} < 1.0003,
\]
and hence
\[
k < e \cdot 1.0003 \cdot \frac{15}{14}\, t^{3/2} < 2.92\, t^{3/2}.
\qedhere
\]
\end{proof}

The orbit argument of Lemma~\ref{lem:km} loses a factor $\lambda$. We now
remove this loss by showing that a large subgroup of $\Aut(T)$ fixes every
block containing $E(T)$ setwise. This rests on the following lemma, the
specialization to $t$-designs of \cite[Lemma 1.2]{Chouinard1996}, a
corollary of \cite[Theorem 2.1]{CKK1983} that holds for repeated blocks.

\begin{lemma}[Chouinard]
\label{lem:blockfix}
Let $(X, \cB)$ be a graphical $t$-$\bigl(\binom{n}{2}, k, \lambda\bigr)$
design, let $T_0$ be a $t$-subset of $X$, and let $G \leq \Sym(V)$ fix $T_0$
setwise. Then every block $B$ of $\cB$ with $T_0 \subseteq B$ is fixed
setwise by a subgroup of $G$ of index at most $\lambda$.
\end{lemma}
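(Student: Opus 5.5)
The plan is to use the orbit–stabilizer theorem for the action of $G$ on the set of blocks containing $T_0$. Let $\mathcal{F}$ be the set of distinct $k$-subsets $B$ of $X$ with $T_0 \subseteq B$ and $\mu_{\cB}(B) \geq 1$. Since $G$ fixes $T_0$ setwise, every $\sigma \in G$ satisfies $\sigma(T_0) = T_0 \subseteq \sigma(B)$. Graphicality gives $\mu_{\cB}(\sigma(B)) = \mu_{\cB}(B) \geq 1$. So $G$ permutes $\mathcal{F}$, exactly as in the first part of the proof of Lemma~\ref{lem:km}.

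Next I bound the orbit of a fixed $B \in \mathcal{F}$. The orbit $G \cdot B$ is contained in $\mathcal{F}$. Because $|T_0| = t$ and $(X,\cB)$ has strength $t$, the members of $\cB$ containing $T_0$ number exactly $\lambda$ counted with multiplicity, so $|\mathcal{F}| \leq \lambda$. I will get a sharper count using invariance: all blocks in the orbit $G\cdot B$ have the same multiplicity $\mu_{\cB}(B)$. Hence
\[
|G \cdot B|\,\mu_{\cB}(B) \leq \sum_{B' \in \mathcal{F}} \mu_{\cB}(B') = \lambda,
\]
and in particular $|G \cdot B| \leq \lambda$. This is the only place where repeated blocks require care, and the inequality handles them directly.

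Finally, I apply orbit–stabilizer to the action of $G$ on subsets of $X$. The setwise stabilizer $G_B = \{\sigma \in G : \sigma(B) = B\}$ is a subgroup of $G$ with $[G : G_B] = |G \cdot B| \leq \lambda$, which is the claim. I see no real obstacle in this argument. The only point to verify is that the permuted sets really are blocks containing $T_0$, and that follows from $\sigma(T_0) = T_0$ together with Definition~\ref{def:graphical}.
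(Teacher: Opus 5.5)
Your proof is correct and complete: $G$ permutes the distinct blocks containing $T_0$, there are at most $\lambda$ of them, and orbit--stabilizer converts $|G \cdot B| \leq \lambda$ into $[G : G_B] \leq \lambda$. The paper gives no proof of its own here; it quotes the lemma from Chouinard as a corollary of a theorem of Chouinard, Kramer, and Kreher. Your argument is exactly the orbit count already carried out in the first half of the proof of Lemma~\ref{lem:km}, with $\Aut(T)$ replaced by $G$, so it makes the lemma self-contained. Your refinement $|G \cdot B|\,\mu_{\cB}(B) \leq \lambda$ even gives index at most $\lambda/\mu_{\cB}(B)$, though the later applications do not need this.
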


For a graph $T$ as in Section~\ref{sec:mainproof} and $p \in \{1, 2, 4\}$,
let $A_{m_p} \leq S_{m_p}$ denote the alternating subgroup.

\begin{lemma}
\label{lem:orbit}
Let $(X, \cB)$ be a graphical $t$-$\bigl(\binom{n}{2}, k, \lambda\bigr)$
design, and let $T = m_1 K_1 \dcup m_2 K_2 \dcup m_4 K_4$ be a spanning
$t$-edge subgraph of $K_n$ with $m_p \geq \max\{6, \lambda + 1\}$ for each
$p \in \{1, 2, 4\}$. Then there is a subgroup $H \leq \Aut(T)$ that fixes
every block containing $E(T)$ setwise and contains $A_{m_p}$ for each $p \in
\{1, 2, 4\}$.
\end{lemma}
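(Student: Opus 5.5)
Apply Lemma~\ref{lem:blockfix} with $T_0 = E(T)$ and $G = \Aut(T)$. Then, for each block $B \supseteq E(T)$, the setwise stabilizer $\Aut(T)_B$ has index at most $\lambda$ in $\Aut(T)$. There are at most $\lambda$ distinct blocks containing $E(T)$, since exactly $\lambda$ members counted with multiplicity contain the $t$-subset $E(T)$. We take
\[
H = \bigcap_{B \supseteq E(T)} \Aut(T)_B ,
\]
the intersection over these finitely many blocks. By construction, $H$ fixes every block containing $E(T)$ setwise. It remains to show $A_{m_p} \leq H$ for each $p$, and for this it suffices to show $A_{m_p} \leq \Aut(T)_B$ for each such block $B$.

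Fix a block $B$ and $p \in \{1,2,4\}$, and set $K = \Aut(T)_B \cap S_{m_p}$, where $S_{m_p} \leq \Aut(T)$ is the subgroup defined before Lemma~\ref{lem:orbitbound}. The index of $K$ in $S_{m_p}$ is at most the index of $\Aut(T)_B$ in $\Aut(T)$, since $[S_{m_p} : S_{m_p} \cap U] \leq [\Aut(T) : U]$ for any subgroup $U$ (the cosets of $S_{m_p}\cap U$ in $S_{m_p}$ inject into cosets of $U$). Hence $[S_{m_p} : K] \leq \lambda$.

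We now use the classical fact that for $m \geq 5$ the only subgroups of $S_m$ of index less than $m$ are $A_m$ and $S_m$. Since $m_p \geq \lambda + 1 > \lambda \geq [S_{m_p}:K]$ and $m_p \geq 6$, we get $K \supseteq A_{m_p}$, so $A_{m_p} \leq \Aut(T)_B$ as required.

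I expect the only real obstacle to be justifying the classical fact. If citing it is not acceptable, I would prove it as follows. Let $K$ have index $r < m$. The action of $S_m$ on the cosets of $K$ gives a homomorphism $S_m \to S_r$. Its kernel is a normal subgroup of $S_m$, so it is $1$, $A_m$, or $S_m$. It cannot be $1$, since $m! > r!$. Therefore the kernel contains $A_m$, and since the kernel lies in $K$, we get $A_m \leq K$. This argument needs only $m \geq 5$, so the hypothesis $m_p \geq 6$ is more than enough. The remaining steps are bookkeeping: the intersection defining $H$ is finite, and $H$ is a subgroup of $\Aut(T)$.
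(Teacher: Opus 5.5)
Your proof is correct and follows essentially the paper's argument: Chouinard's lemma bounds the index of the block stabilizer by $\lambda$, and then the coset action, the normal-subgroup structure of a large symmetric or alternating group, and the order comparison $m_p! > \lambda!$ together force $A_{m_p}$ into the stabilizer. The differences are cosmetic. The paper applies the lemma directly with $G = A_{m_p}$ and uses the simplicity of $A_{m_p}$, and it takes $H = \langle A_{m_1}, A_{m_2}, A_{m_4} \rangle$, whereas you take $H$ to be the intersection of the block stabilizers in $\Aut(T)$.
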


\begin{proof}
Put $H = \langle A_{m_1}, A_{m_2}, A_{m_4} \rangle \leq \Aut(T)$, and let $p
\in \{1, 2, 4\}$ and $B$ be a block with $E(T) \subseteq B$. The group
$A_{m_p}$ fixes the $t$-subset $E(T)$ of $X$ setwise, so, by
Lemma~\ref{lem:blockfix}, $B$ is fixed setwise by a subgroup $U \leq
A_{m_p}$ of index at most $\lambda$. Suppose that $U$ is a proper subgroup,
say of index $j$, so that $2 \leq j \leq \lambda$. The action of $A_{m_p}$
on the $j$ cosets of $U$ gives a homomorphism from $A_{m_p}$ to the
symmetric group of degree $j$. Since $m_p \geq 6$, the group $A_{m_p}$ is
simple, so the kernel of this homomorphism, a normal subgroup contained in
$U$, is trivial. Thus $A_{m_p}$ embeds in the symmetric group of degree $j$,
so
\[
\frac{m_p!}{2} = |A_{m_p}| \leq j! \leq \lambda! \leq (m_p - 1)!,
\]
contradicting $m_p!/2 = \frac{m_p}{2}\,(m_p - 1)! > (m_p - 1)!$. Hence $U =
A_{m_p}$, that is, $A_{m_p}$ fixes $B$ setwise. Since $p$ and $B$ were
arbitrary, $H$ fixes every block containing $E(T)$ setwise.
\end{proof}

\begin{lemma}[Orbit gap]
\label{lem:gap}
Let $T$ be a spanning subgraph of $K_n$ with $E(T) \neq E(K_n)$, let $B$ be
a $k$-subset of $E(K_n)$ with $E(T) \subseteq B$, and let $H \leq \Sym(V)$
fix both $E(T)$ and $B$ setwise. Let $L$ be the smallest cardinality of an
$H$-orbit on $E(K_n) \setminus E(T)$. Then $k - |E(T)|$ is either zero or at
least $L$.
\end{lemma}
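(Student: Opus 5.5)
The plan is to show that $B \setminus E(T)$ is a union of $H$-orbits on $E(K_n) \setminus E(T)$, and then read off its cardinality. Put $D = B \setminus E(T)$. Since $E(T) \subseteq B$ and $|B| = k$, we have $|D| = k - |E(T)|$. It therefore suffices to show that $D$ is either empty or has at least $L$ elements.

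First, $D$ is $H$-invariant. For $\sigma \in H$ we have $\sigma(B) = B$ and $\sigma(E(T)) = E(T)$. Since $\sigma$ acts as a bijection on subsets of $X$, it commutes with set difference, so
\[
\sigma(D) = \sigma(B) \setminus \sigma(E(T)) = B \setminus E(T) = D.
\]
In addition, $H$ preserves $E(K_n) \setminus E(T)$, so $H$ genuinely acts on this set, and its orbits there partition it.

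Next, because $D \subseteq E(K_n) \setminus E(T)$ and $D$ is $H$-invariant, $D$ is a disjoint union of $H$-orbits on $E(K_n) \setminus E(T)$. Indeed, if $e \in D$, then $H \cdot e \subseteq D$. If $D$ is nonempty, it contains at least one full orbit, and that orbit has cardinality at least $L$ by the definition of $L$. The hypothesis $E(T) \neq E(K_n)$ only ensures that $L$ is defined, since the set being acted on is then nonempty. This gives $k - |E(T)| \in \{0\} \cup [L, \infty)$.

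There is no real obstacle. The only point needing care is the commutation of $\sigma$ with set difference, which holds because the induced action on $X$ is a permutation.
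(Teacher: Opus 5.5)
Your proposal is correct and follows the paper's proof exactly: $B \setminus E(T)$ is $H$-invariant, hence a union of $H$-orbits on $E(K_n) \setminus E(T)$, each of cardinality at least $L$. You simply spell out the invariance computation and the role of the hypothesis $E(T) \neq E(K_n)$, which the paper leaves implicit.
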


\begin{proof}
The set $B \setminus E(T)$ is $H$-invariant, hence a union of $H$-orbits,
each of cardinality at least $L$.
\end{proof}

\begin{theorem}
\label{thm:explicit}
Every nontrivial graphical $t$-$\bigl(\binom{n}{2}, k, \lambda\bigr)$ design
with $t \geq 2$ satisfies
\[
t < 2.3\, T_\lambda^2,
\qquad
n < 4.7\, T_\lambda^2,
\qquad \text{and} \qquad
k < 11.1\, T_\lambda^4,
\]
where $T_\lambda = \max(10^5,\ 7\lambda + 43)$, as in
Proposition~\ref{prop:excess2}.
\end{theorem}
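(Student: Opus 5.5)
By Proposition~\ref{prop:transfer} with $S = T_\lambda$ (note $T_\lambda \geq 7\lambda + 43$), it suffices to show that every nontrivial graphical $t$-design of index $\lambda$ satisfying \eqref{eq:range} has $t < T_\lambda$. So suppose, for a contradiction, that such a design has $t \geq T_\lambda$, and in particular $t \geq 10^5$ and $t \geq 7\lambda + 43$.

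First I choose the test graph. Since $t \leq n$ and $t \geq 72$, Lemma~\ref{lem:testgraph2} provides a spanning $t$-edge graph $T = m_1 K_1 \dcup m_2 K_2 \dcup m_4 K_4$ with every $m_p \geq (t - 36)/7$. I then check the hypothesis of Lemma~\ref{lem:orbit}. From $t \geq 7\lambda + 43$ we get $(t-36)/7 \geq \lambda + 1$, and from $t \geq 10^5$ we get $(t-36)/7 \geq 6$. Hence $m_p \geq \max\{6, \lambda+1\}$.

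Next I produce a block and a large group fixing it. By the strength condition, some block $B$ contains $E(T)$. Lemma~\ref{lem:orbit} gives $H \geq \langle A_{m_1}, A_{m_2}, A_{m_4}\rangle$ fixing both $E(T)$ and $B$ setwise. Rerunning the argument of Lemma~\ref{lem:orbitbound} with $A_{m_p}$ in place of $S_{m_p}$, I bound the $H$-orbits on $E(K_n)\setminus E(T)$ from below. For $m_p \geq 4$, the group $A_{m_p}$ is $2$-transitive on the $K_p$-components, and it is transitive, so the same count applies. Each orbit then has size at least $\binom{m}{2}$ with $m = \min m_p \geq (t-36)/7$. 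Because $k > t$ by nontriviality, the orbit gap Lemma~\ref{lem:gap} gives
\[
k - t \geq \binom{m}{2} \geq \frac{(t-36)(t-43)}{98}.
\]

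For the contradiction, Proposition~\ref{prop:excess2} gives $k < 2.92\, t^{3/2}$. Combining,
\[
\frac{(t-36)(t-43)}{98} < 2.92\, t^{3/2} - t < 2.92\, t^{3/2}.
\]
This fails for $t \geq 10^5$: the left side is about $t^2/98$, which exceeds $2.92\,t^{3/2}$ once $\sqrt t > 287$ or so, i.e.\ $t > 8.2\times 10^4$. The factors $(1 - 36/t)(1-43/t)$ do not spoil this at $t = 10^5$.

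Hence $t < T_\lambda$ for all designs satisfying \eqref{eq:range}, and Proposition~\ref{prop:transfer} yields the three stated bounds. The only delicate points are the $2$-transitivity of $A_{m_p}$ replacing $S_{m_p}$, and the final numerical check. I expect the numerical inequality at the threshold $10^5$ to be the main thing to verify carefully, since the constant $10^5$ seems to have been chosen to make it hold with modest room.
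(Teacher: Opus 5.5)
Your proposal is correct and follows the paper's proof step for step: reduce via Proposition~\ref{prop:transfer} with $S = T_\lambda$, build $T$ from Lemma~\ref{lem:testgraph2}, use Lemma~\ref{lem:orbit} and the $2$-transitivity of $A_{m_p}$ to get $H$-orbits of size at least $(t-36)(t-43)/98$, then contradict $k < 2.92\, t^{3/2}$ via Lemma~\ref{lem:gap}. Your final numerical check (the threshold $\sqrt{t} > 2.92 \cdot 98 \approx 286$, with the factors $(1-36/t)(1-43/t)$ harmless at $t = 10^5$) plays the role of the paper's inequality $(t-36)(t-43) \geq 315\, t^{3/2}$, just stated less explicitly.
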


\begin{proof}
Consider a nontrivial graphical $t$-design satisfying \eqref{eq:range} with
$t \geq T_\lambda$. Proposition~\ref{prop:excess2} gives $k < 2.92\,
t^{3/2}$. Since $t \leq n$, Lemma~\ref{lem:testgraph2} provides a spanning
$t$-edge graph $T = m_1 K_1 \dcup m_2 K_2 \dcup m_4 K_4$ with $m_1, m_2, m_4
\geq (t - 36)/7$, and $t \geq 78$ gives $(t - 36)/7 \geq 6$, while $t \geq
7\lambda + 43$ gives $(t - 36)/7 \geq \lambda + 1$. Hence
Lemma~\ref{lem:orbit} provides a subgroup $H \leq \Aut(T)$ that fixes every
block containing $E(T)$ setwise and contains $A_{m_p}$ for each $p \in \{1,
2, 4\}$. The proof of Lemma~\ref{lem:orbitbound} used only that the groups
$S_{m_p}$ permute the components of each type $2$-transitively. Since $m_p
\geq 6$, the alternating groups $A_{m_p}$ do so as well, so every $H$-orbit
on $E(K_n) \setminus E(T)$ has cardinality at least $\binom{m}{2} \geq (t -
36)(t - 43)/98$, where $m = \min(m_1, m_2, m_4)$. Since $(X, \cB)$ has
strength $t$ and $T$ has $t$ edges, some block $B$ contains $E(T)$. The
group $H$ fixes both $E(T)$ and $B$ setwise, so Lemma~\ref{lem:gap} gives
either $k = t$, which contradicts the nontriviality condition $t < k$, or
\[
k - t \geq \frac{(t - 36)(t - 43)}{98}.
\]
Since $t \geq 10^5$, we have $(t - 36)(t - 43) \geq 315\, t^{3/2}$, so $k -
t \geq (315/98)\, t^{3/2} > 3.2\, t^{3/2}$, which contradicts $k < 2.92\,
t^{3/2}$. Hence, every nontrivial graphical $t$-design satisfying
\eqref{eq:range} has $t < T_\lambda$. Since $T_\lambda \geq 7\lambda + 43$,
Proposition~\ref{prop:transfer} completes the proof.
\end{proof}

\section{Concluding Remarks}
\label{sec:consequences}

For large $\lambda$, the bounds of Theorem~\ref{thm:explicit} are of order
$\lambda^2$ for the strength and the number of vertices, and of order
$\lambda^4$ for the block size. The content lies in the bounds on $t$ and
$n$, since the bound on $k$ is simply $k < \binom{n}{2}$.

The constants in Theorem~\ref{thm:explicit} come from the choice of $K_4$ in
Lemma~\ref{lem:testgraph}. Let $p \geq 4$. If the test graph is constructed
instead as a disjoint union
\[
T = m_1 K_1 \dcup m_2 K_2 \dcup m_p K_p,
\]
subject to $m_2 + \binom{p}{2} m_p = t$ and $m_1 + 2 m_2 + p\, m_p = n$,
then, for $t \leq n$ and $t \geq t_0(p)$, the argument of
Lemma~\ref{lem:testgraph} yields
\[
m_1, m_2, m_p \geq c_p t - a_p,
\qquad \text{where} \qquad
c_p = \frac{\binom{p}{2} - p}{3 \binom{p}{2} - p},
\]
with $a_p$ and $t_0(p) \geq p$ depending only on $p$. No choice of the three
counts exceeds $c_p t$ when $n = t$. The condition $n \geq p$, necessary for
$T$ to fit on $n$ vertices, follows from $n \geq t \geq t_0(p)$. Here $c_4 =
1/7$, and $c_p \to 1/3$ as $p \to \infty$. The orbit bound of
Lemma~\ref{lem:orbitbound} becomes $\binom{c_p t - a_p}{2}$, which
approaches $t^2/18$. Carrying these constants through the proof of
Theorem~\ref{thm:explicit} shows that the strength of a nontrivial graphical
$t$-design satisfying \eqref{eq:range} is less than $(3 +
\varepsilon)\lambda$ once $p$ and $\lambda$ are large enough in terms of
$\varepsilon$, and the descent of Proposition~\ref{prop:descent} then shows
that, for every $\varepsilon > 0$, there are $p(\varepsilon)$ and
$\lambda_0(\varepsilon)$ such that every nontrivial graphical
$t$-$\bigl(\binom{n}{2}, k, \lambda\bigr)$ design with $t \geq 2$ and
$\lambda \geq \lambda_0(\varepsilon)$ satisfies
\[
t < \Bigl(\frac{49}{2} + \varepsilon\Bigr)\lambda^2,
\qquad
n < (49 + \varepsilon)\lambda^2,
\qquad \text{and} \qquad
k < \Bigl(\frac{2401}{2} + \varepsilon\Bigr)\lambda^4.
\]

The order of these bounds is unlikely to be sharp. The determination of the
graphical $t$-wise balanced designs of index at most two in \cite{CKK1983}
produced only designs of small strength, and no family of nontrivial
graphical $t$-designs whose strength grows with the index is known. Whether
the strength of a nontrivial graphical $t$-design can grow even linearly in
$\lambda$ is an open question. So is Chouinard's conjecture for simple
graphical $t$-wise balanced designs \cite{Chouinard1996}.

\section*{Acknowledgement}

Research supported by SUTD Grant SKI 2021\_07\_04.

\section*{Use of Artificial Intelligence}

Following the recommendations of the Leiden Declaration on Artificial
Intelligence and Mathematics \cite{Leiden2026}, the author discloses the use
of automated tools in the preparation of this paper. The results and their
proofs are due to the author. A large language model (Claude, Anthropic) was
used, under the author's direction, to draft and revise the exposition, to
reorganize the manuscript, to suggest simplifications of the proofs, to
typeset and compile the manuscript, to check the numerical constants in
Proposition~\ref{prop:excess2} and Theorem~\ref{thm:explicit}, to compare
the statements of the results quoted from \cite{CKK1983, Chouinard1996,
RCW1975} with their original texts, and to carry out the routine
computations in the remarks of Section~\ref{sec:consequences}. Every
suggestion of the model was reviewed and verified by the author, who takes
sole responsibility for the content of this paper.

\end{document}